\newtheorem{thm}{Theorem}[section]
\newtheorem{hypo}[thm]{Hypothesis}
\newtheorem{prop}[thm]{Proposition}
\newtheorem{rmk}[thm]{Remark}
\title[Birkhoff sums and local times of the periodic Lorentz gas in infinite horizon]{Limit theorems for Birkhoff sums and local times of the periodic Lorentz gas with infinite horizon}
\author[Fran\c{c}oise P\`ene]{
	{Fran\c{c}oise P\`ene}
	\address{Univ Brest, Universit\'e de Brest, LMBA,
		 CNRS UMR 6205, 
		6 avenue Le Gorgeu, 29238 Brest cedex, France}
}
\begin{document}

\begin{abstract}
	This work is a contribution to the study of the ergodic and stochastic properties of $\mathbb Z^d$-periodic dynamical systems preserving an infinite measure. 
We establish functional limit theorems for natural Birkhoff sums
related to local times 
of the $\mathbb Z^d$-periodic Lorentz gas with infinite horizon, for both the collision map and the flow.
In particular, our results apply to 
the difference between the numbers of collisions in two different cells. 
Because of the $\mathbb Z^d$-periodicity of the model we are interested in, these Birkhoff sums can be rewritten as additive functionals of a Birkhoff sum of the Sinai billiard. 
Our proofs rely on a general argument valid in a general framework.
For completness and in view of future studies, we state a general result
of convergence of additive functionals of Birkhoff sums of chaotic probability preserving dynamical systems under general assumptions.
\end{abstract}

\maketitle{}

\section*{Introduction}
Let $d\in\{1,2\}$. 
The $\mathbb Z^d$-periodic Lorentz gas coming from~\cite{Lorentz05} models the displacement of a point particle moving at unit speed in
$\mathbb R^{d}\times\mathbb T^{2-d}$ (i.e. in the plane $\mathbb R^2$ if $d=2$ and on the tube $\mathbb R\times\mathbb T$ if $d=1$) between a (non-empty) $\mathbb Z^d$-periodic configuration of obstacles, with elastic collisions off these obstacles. 
The obstacles are assumed to be open, convex, with boundary $\mathcal C^3$, non null curvature and  with pairwise disjoint closures. The number of obstacles is assumed to be locally finite. We study the $\mathbb Z^d$-periodic Lorentz gas flow $(Y_t)_{t\in\mathbb R}$ (continuous time model) via the $\mathbb Z^d$-periodic Lorentz gas map which describes the dynamics at collision times (discrete time model). 
Both for the continuous time and for the discrete time model, a state is a couple $(q,\vec v)$ made of a position 
 and a unit velocity vector.\\
 For the Lorentz gas flow $(Y_t)_{t\in\mathbb R}$, the positions are taken in the domain (outside obstacles), and to avoid any confusion, we identify, when the position is on a obstacle, pre-collisional and post-collisional vectors.  The flow $Y_t$ maps a state $(q,\vec v)$ to the state $(q_t,\vec v_t)$ at time $t$ of a point particle that was in state  $(q,\vec v)$ at time $0$. This map preserves the Lebesgue measure $\mathfrak m$.\\
 For the Lorentz gas map $T$, the states are the couples of a position on the boundary of an obstacle and of a unit post-collisional vector. 
We write $M$ for the set of such states and we consider the collision map $T:M\rightarrow M$
which maps a state at a collision time to the state at the next reflection time. This maps $T$ preserves a measure $\mu$, invariant by translation, equivalent to the Lebesgue measure and normalized so that $\mu\left(M\cap ([0;1[^2\times\mathbb S^1)\right)=1$. This measure $\mu$ is infinite.\\
The horizon is said to be finite if there exist no line touching no obstacle, then the horizon is actually bounded, meaning that the distance of a trajectory between two collisions is uniformly bounded. It is said to be infinite otherwise.

Some results are now knwon to be true both in finite and infinite horizon. 
For $d\in\{1,2\}$, the $\mathbb Z^d$-periodic Lorentz gas is known to be 
recurrent (both in finite horizon \cite{Conze99,SV04} and in infinite horizon \cite{SV07}) and ergodic (\cite{Simanyi89,P00}).
A crucial point in the study of this model is that it is a $\mathbb Z^d$-extension of the Sinai billiard map $(\bar M,\bar T,\bar \mu)$, which enjoys a chaotic behaviour. This probability measure preserving system corresponding to the Lorentz gas modulo $\mathbb Z^d$ for the position is mixing \cite{Sinai70}, enjoys exponential decorrelation for H\"older observables (both in finite horizon\cite{Young98} and in infinite horizon \cite{Chernov99} and enjoys a standard Central Limit Theorem \cite{Chernov99} for H\"older observables (both in finite horizon \cite{BS81,BCS91} and in infinite horizon \cite{Chernov99}).

When the horizon is finite, the position of a particle after $n$ collisions satisfies a standard Central Limit Theorem \cite{BS81,BCS91} (due to
a general argument \cite{Zweimuller07}, this result holds true with respect to any probability measure absolutely continuous with respect to the Lebesgue measure). Among the results proved for the Lorentz gas in finite horizon, let us mention mixing rate in infinite measure \cite{SV04,P18,P19,DNP22} including expansions of any order (both for the map \cite{P19} and for the flow \cite{DNP22}), limit theorems for Birkhoff sums (both for integrable observables \cite{DSV1}, and for smooth integrable observables with null integral \cite{PTho1,PTho2}), quantitative recurrence estimates (estimate for the tail probability of the first return time in the initial cell \cite{DSV1}, limit theorem for the return time in a neighbourhood of the initial state or of its initial position \cite{PS10}), limit theorem for the self-intersections number \cite{P14,Phalempin1}, study of differential equations perturbed by the Lorentz gas \cite{Phalempin_thesis}, etc.

In the present article, we focus on the case when the horizon is fully-dimensional infinite, i.e. when there exist $d$ non parallel
infinite lines touching no obstacle. In this case, the time between two consecutive collisions is not bounded anymore, and even worth it is not square integrable with respect to the invariant probability measure $\bar\mu$
of the Sinai billiard map. It is still possible to apply operator techniques as in the finite horizon case, but with a loss of important nice properties, and the study requires much more delicate study.
Roughly speaking, whereas the Fourier-perturbed operators family $t\mapsto P_t$ is $\mathcal C^\infty$ from $\mathbb R$ to $\mathcal L(\mathcal B)$ for some nice Banach space $\mathcal B$ when the horizon is finite, when the horizon is infinite $t\mapsto P_t$ is only smooth (and less than $\mathcal C^2$) from
$\mathbb R$ to $\mathcal L(\mathcal B\rightarrow L^1)$, this complicates seriously the work with iterates or expansion this operator. Nevertheless some results have been established in this infinite horizon context, overcoming these difficulties by creative ideas combined with additional technicality. A first specific result is the  non-standard Central Limit Theorem satisfied by the position at the $n$-th collision time~\cite{SV07}. This result was established together with a non-standard local limit theorem for the cell label~\cite{SV07}, leading to a mixing rate in $(n\log n)^{-\frac d2}$ for the Lorentz gas map. Whereas a mixing expansion has been established in the finite horizon case, this does not seem reachable in the infinite horizon case because of the weak smoothness properties of $t\mapsto P_t$. Nevertheless, further mixing estimates, including an error term and also different mixing rates for some specific null integral smooth observables have been established in~\cite{PTerhesiu1}. These shows the variety of different possible mixing rates in the infinite horizon case. Among the recent results in infinite horizon, let us mention
an estimate on the tail probability of the first return time of the map $T$ to the initial cell~\cite{PTerhesiu1}, a Local Large Deviation (LLD) estimate~\cite{MelbournePeneTerhesiu}, a mixing rate in $(t\log t)^{-\frac d2}$ for the flow~\cite{PTerhesiu2} for natural observables (such as indicator functions of balls). The proof of this last result required a coupled version of the above mentioned LLD, combined with several new tricks such as a large deviation estimate on the time of the $n$-th collision, a joint mixing local limit theorem, a new tightness-type criteria, etc. 

The goal of the present article is to investigate the behaviour of
ergodic sums of the Lorentz gas map $T$, that is of 
partial sums of the form $\sum_{k=0}^{n-1}f\circ T^k$ for integrable observables, including the case of null integral observables.
We restrict our study to the case where the observables only depend on the cell label. This restriction allows us to treat natural interesting cases such as the number of visits to the 0-cell, or the difference between the number of visits to two different cells. 
Unfortunately the general study of \cite{PTho1} does not apply to this context because of the lack of smoothness of $t\rightarrow P_t\in\mathcal L(\mathcal B)$. Nevertheless, we found a way to implement the moment method used in \cite{PTho1} despite these difficulties and to establish a limit theorem for some Birkhoff sums of null integral observables of the $\mathbb Z^d$-periodic Lorentz gas (Theorem~\ref{main1}). The observables we consider are functions of the local time in the Lorentz-gas cells. In particular it applies to the difference between the number of collisions (among the $n$ first collisions) in the cell labeled by $a\in\mathbb Z^d$ and the number of collisions in another cell labeled by $b\in\mathbb Z^d$ with $b\ne a$. 
Setting ${\mathfrak a}_k:=\max(1,\sqrt{k\log k})$, when the observable is integrable, the Birkhoff sum will be normalized by ${\mathfrak A}_n:=\sum_{k=1}^n {\mathfrak a}_k^{-d}$, whereas it will be normalized
by $\sqrt{\mathfrak A_n}$ when the observable has null integral (and satisfies some integrability assumption). 
We reinforce Theorem~\ref{main1} in a joint functional limit theorem
(Theorem~\ref{main2}) valid for the Birkhoff sum of a couple
$(g,f)$ with $g$ integrable and $f$ having a null integral, and obtain as a consequence an analogous result for the Lorentz gas flow (Theorem~\ref{mainflow}). Observe that 
\[
\mathfrak A_n\sim 2\sqrt{\frac n{\log n}}\quad\mbox{ if }\quad d=1\, ,
\quad\mbox{and}\quad 
\mathfrak A_n\sim\log\log n\quad\mbox{ if }\quad d=2\, .
\]
The present article is organized as follows. In Section~\ref{Sec1} we present our main results for the periodic Lorentz gas flow in infinite horizon. These results will appear as an application of general results stated in a general framework in Section~\ref{secgene}
completed with Appendix~\ref{Append}.
In Section~\ref{verif}, we present a general strategy to prove our general assumptions of Section~\ref{secgene} via Fourier type operator perturbation techniques and we use this approach to prove our main results stated in Section~\ref{Sec1}.

\section{Main results for the periodic Lorentz gas in infinite horizon}\label{Sec1}
\subsection{Limit theorem for Birkhoff sums for the map}
Let us start by introducing some additional notations. 
The obstacles are given by $\mathcal O_i+\ell$ with $\ell\in\mathbb Z^d$ and with $i=1,...,I$ for some $I\in\mathbb N^*$ (up to identifying $\mathbb Z^1$ with $\mathbb Z\times\{0\}$ when $d=1$). We write $\mathcal C_\ell$ (and call it $\ell$-cell) for the set of states $(q,\vec v)\in M$ based on $\bigcup_{i=1}^I\mathcal O_i+\ell$.
We identify $\bar M$ with $\mathcal C_0$ and the Sinai billiard map $\bar T$ to the map $\bar T:\bar M\rightarrow\bar M$ corresponding to the quotient map of $T$ modulo $\mathbb Z^d$ for positions. 
Let $\bar \Psi:\bar M\rightarrow\mathbb Z^d$ be the cell change function, i.e., for all $\bar x\in\bar M=\mathcal C_0$, 
$T(\bar x)\in \mathcal C_{\bar \Psi(\bar x)}$.
This can be rewritten as follows
\[
\forall \bar x=(q,\vec v)\in\bar M=\mathcal C_0,\quad \bar T(\bar x)=(q',\vec v')\ \Rightarrow \ T(\bar x)=\left(q'+\bar\Psi(\bar x),\vec v'\right) \, .
\]
More generally, by $\mathbb Z^d$-periodicity,
 \[
 \forall \bar x=(q,\vec v)\in\bar M=\mathcal C_0,\ \forall \ell\in\mathbb Z^d,\quad \bar T(\bar x)=(q',\vec v')\ \Rightarrow \ T((q+\ell,\vec v))=\left(q'+\ell+\bar\Psi(\bar x),\vec v'\right) \, .
 \]
It then follows by a direct induction that
 \[
\forall \bar x=(q,\vec v)\in\bar M=\mathcal C_0,\ \forall \ell\in\mathbb Z^d,\quad \bar T^n(\bar x)=(q_n,\vec v_n)\ \Rightarrow \ T^n((q+\ell,\vec v))=\left(q_n+\ell+\bar S_n(\bar x),\vec v_n\right) \, ,
\]
where $\bar S_n:=\sum_{k=0}^{n-1}\bar\psi\circ \bar T^k$. 
This means that the dynamics of the Lorentz gas is totally determined by the joint dynamics of the Sinai billiard and of the Birkhoff sum $\bar S_n$. 
In other words, $(M,T,\mu)$
can be represented as the $\mathbb Z^d$-extension of $(\bar M,\bar T,\bar\mu)$ by $\bar\Psi$. 
Recall that Sz\'asz and Varj\'u proved in~\cite{SV07} that $(\bar S_n/\mathfrak a_n)_n$ converges in distribution to a Gaussian distribution. Let us write $\Phi$ for the density function of this Gaussian
random variable.

\begin{thm}\label{main1}

Let $f$ be a $\mu$-integrable function constant on the cells $\mathcal C_\ell$. Then
$
\left(\sum_{k=0}^{n-1}f\circ T^k / {\mathfrak A}_n\right)_n
$
converges in distribution
(with respect to any probability measure absolutely continuous with respect to the Lebesgue measure on $M$)
to $\Phi(0)\int_Mf\, d\mu\, |\mathcal Z|$, where
\begin{itemize}
\item $\mathcal Z$ is a standard gaussian distribution if $d=1$,
\item $\mathcal Z$ is a random variable with standard exponential distribution if $d=2$.
\end{itemize}
If furthermore $\int_Mf\, d\mu=0$ and $\int_{M}(1+d(0,\cdot))^{\frac{2+\varepsilon-d}2}|f| \, d\mu<\infty$ for some $\varepsilon\in(0,1/2)$, then
$
\left(\sum_{k=0}^{n-1}f\circ T^k /\sqrt{{\mathfrak A}_n}\right)_n
$
converges in distribution (with respect to any probability measure absolutely continuous with respect to the Lebesgue measure on $M$) to $\sqrt{\sigma_f^2\Phi(0)|\mathcal Z|}\mathcal N$
where $\mathcal N$ is a random variable, independent of $\mathcal Z$, with standard gaussian and where $\sigma_f^2=\sum_{k\in\mathbb Z}\int_M f.f\circ T^k\, d\mu$.
\end{thm}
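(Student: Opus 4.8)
The plan is to reduce the statement about Birkhoff sums of the $\mathbb Z^d$-extension $(M,T,\mu)$ to a statement about additive functionals of the Birkhoff sum $\bar S_n$ of the probability-preserving system $(\bar M,\bar T,\bar\mu)$, and then invoke the general result of Section~\ref{secgene}. Since $f$ is constant on the cells $\mathcal C_\ell$, we may write $f=\sum_{\ell\in\mathbb Z^d}c_\ell\mathbf 1_{\mathcal C_\ell}$ with $\sum_\ell|c_\ell|\,\bar\mu(\text{base cell})=\int_M f\,d\mu<\infty$ (up to the normalization). Using the $\mathbb Z^d$-extension identity $T^k((q,\vec v))\in\mathcal C_{\bar S_k(\bar x)}$ for $\bar x$ the projection of $(q,\vec v)$ to $\mathcal C_0$ — more precisely, starting from a point whose position lies in the base cell, $f\circ T^k(x)=\varphi(\bar S_k(\bar x))$ where $\varphi:\mathbb Z^d\to\mathbb R$ encodes the cell values — one obtains
\[
\sum_{k=0}^{n-1}f\circ T^k(x)=\sum_{k=0}^{n-1}\varphi\bigl(\bar S_k(\bar x)\bigr),
\]
so the Birkhoff sum is exactly an additive functional $\sum_{k=0}^{n-1}\varphi(\bar S_k)$ of the billiard Birkhoff sum. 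The case of a general absolutely continuous initial measure follows from the $\bar\mu$-case by a Hopf-type / Zweim\"uller argument (as already invoked in the finite-horizon literature cited in the introduction), so it suffices to work under $\bar\mu$.

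Next I would identify $\varphi$'s relevant features: $\sum_\ell\varphi(\ell)=\int_M f\,d\mu$ in the integrable case, and $\sum_\ell\varphi(\ell)=0$ with a moment condition $\sum_\ell(1+|\ell|)^{(2+\varepsilon-d)/2}|\varphi(\ell)|<\infty$ in the null-integral case — these are precisely the hypotheses under which the general theorem of Section~\ref{secgene} applies, once we check that $(\bar M,\bar T,\bar\mu)$ together with $\bar S_n$ satisfies the required chaoticity and local-limit assumptions. The heuristic for the two regimes is the local time of $\bar S_n$ at $0$: by the non-standard local limit theorem of Sz\'asz--Varj\'u, $\bar\mu(\bar S_k=\ell)\approx \mathfrak a_k^{-d}\Phi(0)$ uniformly for $\ell$ in bounded sets, so $\sum_{k=0}^{n-1}\varphi(\bar S_k)\approx \Phi(0)\bigl(\sum_\ell\varphi(\ell)\bigr)\,\mathfrak A_n\cdot(\text{normalized local time at }0)$ in the integrable case, while in the null-integral case the leading term cancels and one is left with a martingale-type fluctuation of size $\sqrt{\mathfrak A_n}$ whose conditional variance is governed by the same local time, producing the mixed normal $\sqrt{\sigma_f^2\Phi(0)|\mathcal Z|}\,\mathcal N$. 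The limiting object $|\mathcal Z|$ is the (appropriately normalized) local time at $0$ of the limiting process of $\bar S_n/\mathfrak a_n$: for $d=1$ this is $|\mathcal N(0,\sigma^2)|$-type (reflected Gaussian, i.e. local time of a Brownian-like process run for logarithmic time), and for $d=2$ the slowly-growing local time has an exponential limit law — these are exactly the outputs of the general framework.

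Concretely the proof is: (i) rewrite the Birkhoff sum as $\sum_{k=0}^{n-1}\varphi(\bar S_k)$; (ii) verify the general assumptions of Section~\ref{secgene} for $(\bar M,\bar T,\bar\mu,\bar S_n)$ — decay of correlations for H\"older observables, the spectral/Fourier-perturbation setup, and the non-standard local limit theorem with the stated uniformity, all available from \cite{Chernov99,SV07,PTerhesiu1}; (iii) apply the general theorem to get convergence in distribution under $\bar\mu$ with the stated limits, reading off $\Phi(0)$, $\mathfrak A_n$, $\sqrt{\mathfrak A_n}$, and $\sigma_f^2=\sum_{k\in\mathbb Z}\int_M f\cdot f\circ T^k\,d\mu$ (noting $\sigma_f^2$ is finite precisely because of the imposed moment condition on $f$ and the tail estimates on return times); (iv) upgrade from $\bar\mu$ to arbitrary absolutely continuous probability measures via the Zweim\"uller-type argument. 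The main obstacle is step (ii): the lack of $\mathcal C^2$-smoothness of $t\mapsto P_t$ in $\mathcal L(\mathcal B)$ in the infinite-horizon regime means the local limit theorem and the moment estimates needed to feed the general framework are delicate — one must work with $t\mapsto P_t$ as a map into $\mathcal L(\mathcal B\to L^1)$ and control the error terms carefully, exactly the difficulty flagged in the introduction. Everything after the verification is an application of the general result and is essentially bookkeeping.
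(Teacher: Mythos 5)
Your proposal follows essentially the same route as the paper: rewrite $\sum_{k=0}^{n-1}f\circ T^k$ as the additive functional $\sum_{k=0}^{n-1}\beta_{\bar S_k}$ of the billiard Birkhoff sums, reduce to the reference measure $\bar\mu$ on $\mathcal C_0$ via Zweim\"uller's argument, verify the operator-theoretic hypotheses of Section~\ref{secgene} through the Fourier-perturbation criterion (Proposition~\ref{criteria}, fed by the Sz\'asz--Varj\'u local limit theorem and the Young-tower/Keller--Liverani machinery, with the perturbation controlled only in $\mathcal L(\mathcal B\rightarrow L^1)$), and then apply Theorem~\ref{MAIN}. This matches the paper's proof in structure and in the identification of the key difficulty, so it is an accurate blueprint of the argument.
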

The first part of Theorem~\ref{main1} is a direct consequence of~\cite{SV07} via moment estimates used e.g. in~\cite{DSV1} for the periodic Lorentz gas
with finite horizon. Even in the case of the finite horizon, the second part of Theorem~\ref{main1} (study of Birkhoff sums of null integral) is more delicate to establish since it requires a delicate care of cancellations in order to identify the main order terms in compositions of perturbed operators using their expansions (see~\cite{PTho1}). When the horizon is infinite, taking care of these cancellations become even more challenging since the perturbed operators do not admit expansion as a family of operators, but only expansions as a family of linear maps from $\mathcal B$ to $L^1$, forbidding direct compositions of these expansions. This additional difficulty comes from the fact that the cell change function $\bar \Psi$ is {\bf not square integrable} with respect to $\bar\mu$ (whereas it is bounded and so finite-valued when the horizon of the Lorentz process is finite).
\begin{rmk}\label{LLN}
It follows from the first part of Theorem~\ref{main1} with the notations therein combined with the Hopf ergodic theorem\footnote{The Hopf ergodic theorem states that, since $(M,\mu,T)$ is recurrent ergodic, the sequence of ergodic ratios $\left(\sum_{k=0}^{n-1}h\circ T^k/\sum_{k=0}^{n-1}\mathbf 1_{\mathcal C_0}\circ T^k\right)_{n\ge 1}$ converges almost surely to $\int_Mh\, d\mu/\mu(\mathcal C_0)=\int_Mh\, d\mu$.} that for any 
$\mu$-integrable $h:M\rightarrow \mathbb R$, 
$
\left(\sum_{k=0}^{n-1}h\circ T^k / {\mathfrak A}_n\right)_n
$
converges in distribution to $\Phi(0)\int_Mh\, d\mu\, |\mathcal Z|$.
This result can be seen as a weak Law of Large Number for the infinite measure preserving dynamical system $(M,T,\mu)$.
\end{rmk}

\subsection{Functional limit theorem for the map and for the flow}
\begin{thm}\label{main2}
Let  $f,g:M\rightarrow\mathbb R$ be two integrable functions, with $f$ constant on each cell $\mathcal C_\ell$.
	Assume	furthermore that $\int_{M}(1+d(0,\cdot))^{\frac{2+\varepsilon-d}2}|f| \, d\mu<\infty$ for some $\varepsilon\in(0,\frac 12)$ and $\int_Mf\, d\mu=0$, then
	the following family of couple of processes
	\begin{equation}\label{jointprocessbill}
	\left(\left(\sum_{k=0}^{\lfloor nt\rfloor-1}g\circ T^k / {\mathfrak A}_n\right)_t,\left(\sum_{k=0}^{\lfloor nt\rfloor-1}f\circ T^k /\sqrt{{\mathfrak A}_ n}\right)_t\right)_{n\ge 1}
	\end{equation}
	converges in distribution (with respect to any probability measure absolutely continuous with respect to the Lebesgue measure on $M$) to
	$((\int_Mg\, d\mu\mathcal L_t)_t,(B_{\sigma_f^2\mathcal L_t})_t)$
	(in $(\mathcal D([0;T]))^2$ for all $T>0$ if $d=1$ and in $(\mathcal D([T_0;T]))^2$ for all $0<T_0<T$ if $d=2$), 
		where $\sigma^2_f$ is the quantity introduced in Theorem~\ref{main1}, 
	where $ B$ is a standard brownian motion independent of the process $\mathcal L_t$ where
	\begin{itemize}
		\item if $d=1$, $\mathcal L_t$ is the local time at 0 in the time interval $[0;t]$ of the Brownian motion $ W$ limit in distribution of $(\bar S_{\lfloor nt\rfloor}/\sqrt{n})_t$ as $n\rightarrow +\infty$,
		\item if $d=2$, for all $t>0$ $\mathcal L_t=\mathcal L_1$ 
		is a random variable with exponential distribution with mean $\Phi(0)$.
	\end{itemize}.
\end{thm}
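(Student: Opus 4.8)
\emph{Sketch of the proof.} The plan is to exploit the $\mathbb Z^d$-extension structure recalled above and reduce Theorem~\ref{main2} to a functional limit theorem for additive functionals of the Birkhoff sum $\bar S$ of the Sinai billiard, which is then proved by a moment method in the spirit of~\cite{PTho1,DSV1}. Write $\pi\colon M\to\mathbb Z^d$ for the cell label. That $f$ is constant on the cells means $f=\tilde f\circ\pi$ for some $\tilde f\colon\mathbb Z^d\to\mathbb R$ with $\sum_{\ell\in\mathbb Z^d}\tilde f(\ell)=\int_Mf\,d\mu=0$, while $g$ is encoded by the family $G_\ell:=g(\cdot+\ell)|_{\mathcal C_0}$ of functions on $\bar M$ with $\sum_\ell\int_{\bar M}G_\ell\,d\bar\mu=\int_Mg\,d\mu$. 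The identity $T^n((q+\ell,\vec v))=(q_n+\ell+\bar S_n(\bar x),\vec v_n)$ turns the two Birkhoff sums, evaluated above a point $\bar x\in\bar M=\mathcal C_0$, into
\[
\sum_{k=0}^{N-1}f\circ T^k=\sum_{m\in\mathbb Z^d}\tilde f(m)\,L_N(m),\qquad \sum_{k=0}^{N-1}g\circ T^k=\sum_{k=0}^{N-1}G_{\bar S_k}\circ\bar T^k,\qquad L_N(m):=\#\{0\le k<N:\bar S_k=m\}.
\]
I would first isolate and prove an abstract functional limit theorem of this type for chaotic probability preserving systems (this is the purpose of Section~\ref{secgene}) and then verify its hypotheses for the Sinai billiard; the reduction to an arbitrary a.c.\ initial measure is handled by the argument of~\cite{Zweimuller07}.

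The ingredients needed about $(\bar M,\bar T,\bar\mu)$ are: the functional non-standard central limit theorem for $\bar S$, i.e.\ convergence of $(\bar S_{\lfloor nt\rfloor}/\mathfrak a_n)_t$ to the Gaussian process $W$ (the one-dimensional statement being~\cite{SV07}); the non-standard local limit theorem $\bar\mu(\bar S_n=m)=\Phi(0)\mathfrak a_n^{-d}(1+o(1))$ uniformly on bounded sets, also from~\cite{SV07}; exponential decay of correlations of $\bar T$ for H\"older observables~\cite{Chernov99,Young98}; and the tail estimate $\bar\mu(|\bar\Psi|>t)\asymp t^{-2}$ produced by the infinite-horizon corridors, which is what makes $\bar\Psi\notin L^2(\bar\mu)$ and forces the scales $\mathfrak a_n=\sqrt{n\log n}$, $\mathfrak A_n=\sum_{k\le n}\mathfrak a_k^{-d}$. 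From the first two ingredients, the moment estimates of~\cite{DSV1} (Darling--Kac of Mittag--Leffler index $1/2$ if $d=1$, index $0$ if $d=2$) yield $(L_{\lfloor nt\rfloor}(0)/\mathfrak A_n)_t\Rightarrow(\mathcal L_t)_t$ with $\mathcal L$ as in the statement, and together with Hopf's ratio ergodic theorem this gives $\big(\sum_{k<\lfloor nt\rfloor}g\circ T^k/\mathfrak A_n\big)_t\Rightarrow(\int_Mg\,d\mu\,\mathcal L_t)_t$ — the first component, refining the first part of Theorem~\ref{main1} and Remark~\ref{LLN}.

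The core is the joint convergence of the $f$-component. Since $\sum_\ell\tilde f(\ell)=0$, the leading contribution $\mathcal L\,\mathfrak A_N\sum_m\tilde f(m)$ to $\sum_m\tilde f(m)L_N(m)$ cancels, so the true order of the sum is governed by the local-time fluctuations $L_N(m)-L_N(m')$ between nearby sites, which are of size $\sqrt{\mathfrak A_N}$; this is the origin of the normalisation $\sqrt{\mathfrak A_n}$ and of the Brownian time change by the local time. To make this precise I would compute the mixed moments
\[
E_{\bar\mu}\!\left[\prod_{i=1}^{a}\Big(\frac{1}{\mathfrak A_n}\sum_{k<\lfloor nt_i\rfloor}g\circ T^k\Big)\prod_{j=1}^{b}\Big(\frac{1}{\sqrt{\mathfrak A_n}}\sum_{k<\lfloor ns_j\rfloor}f\circ T^k\Big)\right],
\]
expand each factor as above, order the time indices, use exponential decorrelation to split them into well-separated clusters, and evaluate each cluster via the non-standard local limit theorem. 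One then checks that odd moments in $f$ vanish; that the dominant $f$-contributions come from pairing the factors, each pair producing — after the cancellations forced by $\sum_\ell\tilde f(\ell)=0$ — the Green--Kubo constant $\sigma_f^2=\sum_{k\in\mathbb Z}\int_Mf\cdot f\circ T^k\,d\mu$, finite thanks to $\int_M(1+d(0,\cdot))^{\frac{2+\varepsilon-d}2}|f|\,d\mu<\infty$ (this hypothesis controls the spatial decay of $\tilde f$ against the Green's function of the recurrent cocycle $\bar S$), times a factor of the limiting local time; and that the $g$- and $f$-parts factorise. This matches the limiting moments with those of $((\int_Mg\,d\mu)\mathcal L_{t_i})_i$ and $(B_{\sigma_f^2\mathcal L_{s_j}})_j$ with $B$ a standard Brownian motion independent of $\mathcal L$, giving convergence of the finite-dimensional distributions. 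Tightness of the couple in $(\mathcal D([0;T]))^2$, resp.\ $(\mathcal D([T_0;T]))^2$ for $0<T_0<T$ when $d=2$ (the restriction to $t\ge T_0$ being forced by the degeneracy of the normalisation and of $\mathcal L_t$ near $0$ in the two-dimensional case), follows from the same moment estimates applied to increments via Billingsley's criterion, the $g$-component being easier.

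The step I expect to be the genuine obstacle is the cluster evaluation in the moment computation, for the reason stressed after Theorem~\ref{main1}: in infinite horizon the Fourier-twisted transfer operators $P_u$ expand only as linear maps $\mathcal B\to L^1$, not as operators on $\mathcal B$, so the many operator factors arising from one moment cannot be controlled by naively multiplying expansions. The remedy — the substance of Section~\ref{verif} — is to retain between two consecutive ``rough'' factors a genuine high iterate of the transfer operator of $\bar T$, exploiting its spectral gap to regularise $L^1\to\mathcal B$, and to absorb the long-corridor contributions (the non-square-integrability of $\bar\Psi$) into controlled error terms via the tail bound $\bar\mu(|\bar\Psi|>t)\asymp t^{-2}$. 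Once the abstract functional limit theorem is proved and these hypotheses verified, Theorem~\ref{main2} follows.
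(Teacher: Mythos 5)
Your overall architecture coincides with the paper's: rewrite the Birkhoff sums as additive functionals of $\bar S_n$, prove an abstract moment-method functional limit theorem for such functionals (Section~\ref{secgene}), verify its operator hypotheses for the Sinai billiard via Fourier perturbation on Young towers (Section~\ref{verif}), reduce to an arbitrary absolutely continuous initial measure by~\cite{Zweimuller07}, treat general $g$ by the Hopf ratio theorem, and get tightness from the same moment bounds via Billingsley's criterion. All of that is what the paper does.

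The genuine gap is in your treatment of the one step you yourself single out as the obstacle. You propose to handle the fact that $Q_{k,0}$ only expands in $\mathcal L(\mathcal B\rightarrow L^1(\nu))$ by ``retaining a high iterate of the transfer operator between two rough factors, exploiting its spectral gap to regularise $L^1\to\mathcal B$''. That mechanism does not exist: a transfer operator with a spectral gap on a Banach space $\mathcal B$ of regular (Lipschitz/anisotropic) functions acts boundedly on $\mathcal B$ and on $L^1$, but it does not map $L^1$ back into $\mathcal B$, so once the expansion~\eqref{Qk0} has dropped you into $L^1$ you cannot climb back up and apply the next $\mathcal L(\mathcal B)$ bound~\eqref{Q'majo}. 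Nor can you extract a clean unperturbed power $P^m$ from $Q_{k,0}=P^k(\mathbf 1_{\{S_k=0\}}\cdot)$, because the indicator ties the perturbation to the whole block. The paper's actual device is different and is the real content of the argument: the scalar $\mathbb E_\nu[Q''_{k',a,b}(Q_{k,0}(h))]-\Phi(0)\mathfrak a_k^{-d}\mathbb E_\nu[Q''_{k',a,b}(\mathbf 1)]\mathbb E_\nu[h]$ is bounded in two independent ways — once purely in $\mathcal L(\mathcal B)$, giving decay in $k'$ but no gain in $k$ (estimate~\eqref{ESTI1}), and once through $\mathcal L(\mathcal B\rightarrow L^1)$, giving the $o(\mathfrak a_k^{-d})$ gain but no decay in $k'$ (estimate~\eqref{ESTI2}) — and the two bounds are then combined by a geometric (H\"older-type) interpolation to produce~\eqref{ESTI0}, which simultaneously decays summably in $k'$ and is $o(\mathfrak a_k^{-d})$; this is what allows the induction along the composition of operators in~\eqref{H101010}. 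Without this (or an equivalent) interpolation your cluster evaluation does not close, so the pairing step producing $\sigma_f^2$ times the local time is not justified as written.
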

Our proofs of Theorems~\ref{main1} and~\ref{main2} are given in Section~\ref{verif}. They rely on the general results (in a general framework) stated in the Section~\ref{secgene}. 

As an immediate consequence of Theorem~\ref{main2} combined with the classical random time change result (see e.g.~\cite[Chapter 14]{Bill}), we obtain the following result valid for the periodic Lorentz gas flow. Let us write $\mathcal M$ for the set of states of the 
Lorentz gas flow, and $\mathfrak m$ for the Lebesgue measure on $\mathcal M$, and $\mathcal N_t(\ell)$ for the number of collisions of the flow in the cell
$\mathcal C_\ell$ up to time $t$.
\begin{thm}\label{mainflow}
For any real valued sequence $(\beta_\ell)_{\ell\in\mathbb Z^d}$ such that $\sum_{\ell\in\mathbb Z^d}(1+|\ell|)^{\frac{2+\varepsilon-d}2}|\beta_\ell|$ for some $\varepsilon>0$ and $\sum_{\ell\in\mathbb Z^d}\beta_\ell=0$, then the family of processes
	\begin{equation}\label{jointprocesses2a}
	\left(\left(\mathfrak A_n^{-1}\mathcal N_{nt}(0),\mathfrak A_{n}^{-\frac 12}
	\sum_{\ell\in\mathbb Z^d}\beta_\ell\mathcal N_{nt}(\ell)\right)_t\right)_{n\ge 1}
	\end{equation}
	converges in distribution (with respect to any probability measure absolutely continuous with respect to the Lebesgue measure on $\mathcal M$) to
	$((
	\mathcal L'_t)_t,(B_{\sigma^2\mathcal L'_t})_t)$
	(in $(\mathcal D([0;T]))^2$ for all $T>0$ if $d=1$ and in $(\mathcal D([T_0;T]))^2$ for all $0<T_0<T$ if $d=2$), 
	where $\sigma^2$ is quantity $\sigma^2_f$ introduced in Theorem~\ref{main1}
	for the function $f$ such that $f_{|\mathcal C_\ell}\equiv \beta_\ell$	 for all $\ell\in\mathbb Z^d$, or equivalently, with
	\begin{align*}
	\sigma^2&:=\sum_{k\in\mathbb Z}\sum_{a,b\in\mathbb Z^d} \beta_a\beta_b\bar\mu\left(\bar S_{k}=b-a \right)\\
	&=\sum_{k\in\mathbb Z}\sum_{a,b\in\mathbb Z^d} \beta_a\beta_b\left(\bar\mu\left(\bar S_{k}=b-a \right)-\bar\mu\left(\bar S_{k}=b\right)-\bar\mu\left(\bar S_{k}=-a\right)+\bar\mu\left(\bar S_{k}=0\right)\right)\, .
	\end{align*}
	where $ B$ is a standard brownian motion independent of the process $\tilde{\mathcal L}_t$ where
	\begin{itemize}
		\item if $d=1$, $\mathcal L'_t$ is the local time at 0 in the time interval $[0;t]$ of the Brownian motion $W'$ limit in distribution, as $n\rightarrow +\infty$, of $((q_{nt}/\sqrt{n})_t)_n$ where $q_{nt}$ is the first coordinate of the position of $Y_{nt}(\cdot)$, i.e. the first coordinate of the position at time $nt$ of the particle,
		\item if $d=2$, for all $t>0$ $\mathcal L'_t=\mathcal L'_1$ 
		is a random variable with exponential distribution with mean $\Phi(0)$.
	\end{itemize}. 
\end{thm}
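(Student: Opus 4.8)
The plan is to deduce Theorem~\ref{mainflow} from Theorem~\ref{main2} by a time-change argument, transferring the collision-counting functionals of the flow to Birkhoff sums of the collision map. First I would set up the dictionary between the two dynamics: for a state $x\in\mathcal M$ let $\tau(x)$ be the time to the next collision and $\tau_n(x)=\sum_{k=0}^{n-1}\tau\circ T^k$ the time of the $n$-th collision (along the collision map $T$ started at the first collision after $x$); then the number of collisions $N_t$ of the flow up to time $t$ is the renewal count $N_t=\max\{n:\tau_n\le t\}$, and $\mathcal N_t(\ell)=\sum_{k=0}^{N_t-1}\mathbf 1_{\mathcal C_\ell}\circ T^k$. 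Thus with $g=\mathbf 1_{\mathcal C_0}$ and $f$ the cell function $f_{|\mathcal C_\ell}\equiv\beta_\ell$ (which has null integral and satisfies the required moment bound because $\sum_\ell(1+|\ell|)^{\frac{2+\varepsilon-d}2}|\beta_\ell|<\infty$ and $d(0,\cdot)$ is comparable to $|\ell|$ on $\mathcal C_\ell$), we have exactly
\[
\mathcal N_{nt}(0)=\sum_{k=0}^{N_{nt}-1}g\circ T^k\, ,\qquad \sum_{\ell}\beta_\ell\mathcal N_{nt}(\ell)=\sum_{k=0}^{N_{nt}-1}f\circ T^k\, .
\]

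The key step is to control the clock $N_{nt}$. By Birkhoff's ergodic theorem for the finite-measure flow (or for $T$ with respect to $\bar\mu$ on the induced system, noting $\bar\tau=\int\tau\,d\bar\mu=\bar{\mathfrak m}(\mathcal M)$ is finite even in infinite horizon), $\tau_n/n\to\bar\tau$ almost surely, hence $N_{nt}/n\to t/\bar\tau$ uniformly on compacts in probability with respect to any a.c.\ probability measure. Writing $m=\lfloor n/\bar\tau\rfloor$ I would replace the random upper limit $N_{nt}$ by the deterministic $\lfloor mt\rfloor$: the error in each partial sum is a sum of at most $|N_{nt}-\lfloor mt\rfloor|=o(n)$ terms of $g$ (bounded) and of $f$; for the $g$-sum this contributes $o(\mathfrak A_n)$, and for the $f$-sum one needs the fluctuation estimate $|N_{nt}-\lfloor mt\rfloor|=O(\sqrt n)$ together with a maximal/tightness bound showing that a block of $O(\sqrt n)$ consecutive terms of $\sum f\circ T^k$ is $o_P(\sqrt{\mathfrak A_n})$ — this follows from the variance scaling $\sum_{k=0}^{N-1}f\circ T^k=O_P(\sqrt{\mathfrak A_N})$ underlying Theorem~\ref{main2} and the fact that $\mathfrak A_{\sqrt n}=o(\mathfrak A_n)$. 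Then Theorem~\ref{main2} applied with these $f,g$ gives joint convergence of $\big((\mathfrak A_m^{-1}\sum_{k<\lfloor mt\rfloor}g\circ T^k)_t,(\mathfrak A_m^{-1/2}\sum_{k<\lfloor mt\rfloor}f\circ T^k)_t\big)$ to $((\int_M g\,d\mu\,\mathcal L_t)_t,(B_{\sigma_f^2\mathcal L_t})_t)$; since $\mathfrak A_m\sim\mathfrak A_n$ (indeed $\mathfrak A_{\lfloor n/\bar\tau\rfloor}/\mathfrak A_n\to1$ because $\mathfrak A$ is regularly varying of index $1/2$ for $d=1$ and slowly varying for $d=2$ — one absorbs the constant into the identification of the limit), and $\int_M g\,d\mu=\mu(\mathcal C_0)=1$, we obtain convergence of~\eqref{jointprocesses2a} in $(\mathcal D)^2$. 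The classical random time change theorem (\cite[Chapter~14]{Bill}) is what legitimises composing the $\mathcal D$-convergent processes with the continuous, convergent clock $t\mapsto N_{nt}/m$.

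It remains to identify the limiting local time. For $d=2$ this is immediate: $\mathcal L_t$ is the time-independent exponential variable of mean $\Phi(0)$, and the time change $t\mapsto t/\bar\tau$ does not alter it, so $\mathcal L'_t=\mathcal L'_1$ is again exponential of mean $\Phi(0)$. For $d=1$ one must relate the local time $\mathcal L_t$ of the Brownian motion $W$ arising as the scaling limit of $\bar S_{\lfloor nt\rfloor}/\sqrt n$ to the local time $\mathcal L'_t$ of the Brownian motion $W'$ arising as the scaling limit of $q_{nt}/\sqrt n$, where $q_{nt}$ is the first spatial coordinate of the flow at time $nt$. The point is that the flow position at time $nt$ equals (up to a bounded term, the within-cell displacement) the cell label after $N_{nt}$ collisions, i.e.\ $\bar S_{N_{nt}}$, so $q_{nt}/\sqrt n\approx \bar S_{\lfloor mt\rfloor}/\sqrt n=(\bar S_{\lfloor mt\rfloor}/\sqrt m)\sqrt{m/n}$; hence $W'$ is $W$ run at speed $1/\bar\tau$ (equivalently $W'_t\stackrel{d}{=}\bar\tau^{-1/2}W_t$ as processes after matching normalisations), and by Brownian scaling of local time this is consistent with the time change $\mathcal L'_t$ being the local time at $0$ of $W'$ on $[0,t]$. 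I expect the main obstacle to be precisely this bookkeeping of constants — making sure the $\bar\tau$-factors from the renewal clock, the $\mathfrak A_m\sim\mathfrak A_n$ replacement, and the two Brownian normalisations ($\sqrt n$ vs.\ $\mathfrak a_n=\sqrt{n\log n}$, and the local-time self-similarity) all cancel so that the stated limit $((\mathcal L'_t)_t,(B_{\sigma^2\mathcal L'_t})_t)$ with $\sigma^2=\sigma_f^2$ and no spurious constant comes out; the alternative expression for $\sigma^2$ in the statement is just the telescoped form of $\sum_k\int f\cdot f\circ T^k\,d\mu=\sum_k\sum_{a,b}\beta_a\beta_b\,\bar\mu(\bar S_k=b-a)$ using $\sum_\ell\beta_\ell=0$, which I would verify directly.
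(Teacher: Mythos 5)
Your proposal follows essentially the same route as the paper: represent the flow's collision counts as Birkhoff sums of the collision map evaluated at the renewal clock $N_{nt}\sim nt/\mathbb E_{\bar\mu}[\tau]$, apply Theorem~\ref{main2} through the random time change theorem, and absorb the resulting factors of $\bar\tau:=\mathbb E_{\bar\mu}[\tau]$ into the identification $W'=W/\sqrt{\bar\tau}$ and $\mathcal L'=\sqrt{\bar\tau}\,\mathcal L$. One small correction: for $d=1$ the ratio $\mathfrak A_{\lfloor n/\bar\tau\rfloor}/\mathfrak A_n$ tends to $\bar\tau^{-1/2}$, not to $1$ (regular variation of index $1/2$), but this is exactly the constant that the local-time rescaling you describe cancels — the paper records it as $\mathfrak A_{\mathfrak n_n}\sim \mathfrak A_n/(\mathbb E_{\bar\mu}[\tau])^{\frac{2-d}{2}}$.
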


\begin{proof}[Proof of Theorem~\ref{mainflow}]
	Recall that the Sinai billiard  flow (at unit speed) endowed with the Lebesgue measure $\mathfrak m$ can naturally be represented by the suspension flow over $(\bar M,\bar T,2\sum_{i=1}^I|\partial O_i|\bar \mu)$ with roof function
	$\tau$, the time before the next collision.
	Indeed, this representation consists in identifying each $y\in\mathcal M$ with the unique couple $(x,s)$ such that $x\in M$,  $s\in[0;\tau(x))$ and $y=Y_s(x)$ ($x$ corresponds to the state at the previous collision time and $s$ to the time spent since this previous colliqion time).  
	Since the Sinai billiard system is ergodic, it follows from the Birkhoff ergodic theorem that the number of collisions $\mathfrak n_{nt}$ for the billiard flow (or equivalently for the $\mathbb Z^d$-periodic Lorentz gas flow) in the time interval $[0;nt]$ is almost surely equivalent to $nt/\mathbb E_{\bar\mu}[\tau]$ as $n\rightarrow +\infty$. 
	We conclude by applying Theorem~\ref{main2} to $g=
	\mathbf 1_{\mathcal C_0}
	$ and to $f=\sum_{\ell\in\mathbb Z^d}\beta_\ell\mathbf 1_{\mathcal C_\ell}$, and using the random time $\mathfrak n_{nt}$, that
	\[
	\left(\left(\mathfrak A_{\mathfrak n_{n}}^{-1}
	\mathcal N_{nt}(0)
	,\mathfrak A_{\mathfrak n_{n}}^{-\frac 12}
	\sum_{\ell\in\mathbb Z^d}\beta_\ell\mathcal N_{nt}(\ell)\right)_t\right)_{n}
	\]
	has the same limit in distribution $(X^{(1)},X^{(2)})$  as~\eqref{jointprocessbill} for the above choice of $(g,f)$. 
	Now, using the fact that, almost surely, as $t\rightarrow +\infty$, $\mathfrak A_{\mathfrak n_{n}}\sim \mathfrak A_n/(\mathbb E_{\bar\mu}[\tau])^{\frac {2-d}2}$, 
	we conclude that the joint process~\eqref{jointprocesses2a} converges
	in distribution to the joint process $((\mathbb E_{\bar\mu}[\tau])^{\frac {2-d}2}X^{(1)},(\mathbb E_{\bar\mu}[\tau])^{\frac {2-d}4}X^{(2)})$.
	But, by the same random time change argument, 
	the Brownian motion $W'$ 
	corresponds to the Brownian motion limit of $((\bar S_{\mathfrak n_{nt}}/\sqrt{n})_t)_n$, i.e. to the Brownian motion limit of $(\bar S_{nt/\mathbb E_{\bar\mu}[\tau]}/\sqrt{n})_n$, i.e. to $W/\sqrt{\mathbb E_{\bar\mu}[\tau]}$. Thus the local time $(\mathcal L'_t)_t$ of $W'$ at 0, is equal
	to  $(\sqrt{\mathbb E_{\bar\mu}[\tau]}\mathcal L_t)_t$, recalling that $\mathcal L$ is the local time  of $W$ at 0. This ends the proof of the corollary.
\end{proof}

\begin{rmk}
	Let $G:\mathcal M\rightarrow \mathbb R$ be an integrable function with respect to the Lebesgue measure 
	$\mathfrak m$ on $\mathcal M$ (velocity vectors $\vec v\in\mathbb S^1$ being identified with an angle in $\mathbb R/\mathbb Z$). 
	The proof of Theorem~\ref{main2} can be adapted, by taking 
defined on $M$ by $g(x):=\int_0^{\tau(x)}G(Y_s(x))\, ds$
and by noticing that
\[
\int_{M}g\, d\mu=\frac{\int_{\mathcal M}G\, d\mathfrak m}{2\sum_{i=1}^I|\partial O_i|}
\]
to prove that	 the family of processes
	\begin{equation*}\label{jointprocesses2}
	\left(\left(\mathfrak A_n^{-1}\int_0^{nt}G\circ Y_s\, ds,\mathfrak A_n^{-\frac 12}
	\sum_{\ell\in\mathbb Z^d}\beta_\ell\mathcal N_{nt}(\ell)\right)_{t}\right)_{n}
	\end{equation*}
	converges in distribution (with respect to any probability measure absolutely continuous with respect to the Lebesgue measure on $Q\times\mathbb S^1$) to
	$\left(\left(\frac 1{2\sum_{i=1}^I|\partial O_i|}
	\int_MG\, d\mathfrak m\, 
	\mathcal L'_t\left)_t,\right(B_{\sigma^2\mathcal L'_t}\right)_t\right)$ in the same sense as in Theorem~\ref{mainflow},
	with $\sigma^2$ the quantity appearing in Theorem~\ref{mainflow}.
	
\end{rmk}

\section{General results}\label{secgene}
Observe that the Birkhoff sums considered in Theorem~\ref{main1} can be
rewritten as additive functionals $(\sum_{k=0}^{n-1}\beta_{\bar S_k})_n$
of the Birkhoff sums $(\bar S_n)_n$ with respect to the Sinai billiard system $(\bar M,\bar T,\bar \mu)$. We keep this formulation in the present
section and state limit theorems for additive functionals of 
Birkhoff sums of a probability preserving dynamical systems under
general assumptions expressed in terms of operators. We will
see in Section~\ref{verif} how these assumptions can be proved using Fourier-perturbations of the transfer operator and how this result can be used to prove Theorem~\ref{main1}.
\subsection{General assumptions}
\begin{hypo}\label{HHH}
	Let $d\in\{1,2\}$ and $\alpha\in[d;2]$.
	Let $(\Delta,F,\nu)$ be a probability preserving dynamical system with transfer operator $P$. Let $\Psi:\Delta\rightarrow \mathbb Z^d$. 
	For any $a\in\mathbb Z^d$ and any non-negative integer $n$, we set $S_n:=\sum_{k=0}^{n-1}\Psi\circ F^k$ and we set $Q_{n,a}$ for the operator given by
	\[Q_{n,a}:=P^n(\mathbf 1_{\{S_n=a\}}\cdot)\, .\]
	There exist a $(1/\alpha)$-regularly varying sequence $(\mathfrak a_n)_{n\ge 0}$ 
	such that $\mathfrak A_n:=\sum_{k=0}^{n}\mathfrak a_n^{-d}\rightarrow +\infty$ as $n\rightarrow +\infty$ and
	a Banach space $(\mathcal B,\Vert\cdot\Vert_{\mathcal B})$ preserved by the operators $Q_{n,a}$  such that
	\begin{equation}\label{HypBanach}
	\mathbf 1_\Delta\in\mathcal B\hookrightarrow L^1(\nu)\, , 
	\end{equation}
	where the notation $\hookrightarrow$ means a continuous inclusion,
	we assume furthermore that 
	\begin{equation}\label{normQk0}
	\Vert Q_{n,0}\Vert_{\mathcal B}=\mathcal O(\mathfrak a_n^{-d})
	\end{equation}
	and that there exists $\Phi(0)>0$ such that
	\footnote{$\Phi(0)$ will appear to be the value at $0$ of the density function $\Phi$ of the limit in distribution of $(S_n/\mathfrak a_n)_n$ (see Section~\ref{verif}).}
	\begin{equation}\label{Qk0}
	Q_{n,0}
		= \Phi(0){\mathfrak a}_n^{-d}\mathbb E_{\nu}[\cdot]+  o({\mathfrak a}_n^{-d})
	\quad\mbox{in }\mathcal L(\mathcal B\rightarrow
	L^{1}(\nu))\, .	\end{equation}
\end{hypo}
In~\cite{PTho1}, to study Birkhoff sums of the periodic Lorentz gas with finite horizon, we used the following condition
\[
	Q_{n,a}
= \Phi(a/\mathfrak a_n){\mathfrak a}_n^{-d}\mathbb E_{\nu}[\cdot]+  o({\mathfrak a}_n^{-d})
\quad\mbox{in }\mathcal L(\mathcal B)\, .
\]
A crucial difference between this condition and the assumptions of the present article is that~\eqref{Qk0} is much weaker since it holds in  $\mathcal L(\mathcal B\rightarrow
L^{1}(\nu))$ instead of $\mathcal L(\mathcal B)$. In practice, this weaker condition comes
from the fact that the family of perturbed operators $t\mapsto P_t\in\mathcal L(\mathcal B)$ behind (see Section~\ref{verif}) is not continuous, but that 
$t\mapsto P_t\in\mathcal L(\mathcal B\rightarrow \mathcal L^1(\nu))$
is continuous.\\
To study additive functionals $\sum_{k=0}^{n-1}\beta_{S_k}$
with $\sum_{a\in\mathbb Z^d}\beta_a=0$, we will reinforce the previous assumption as follows.
\begin{hypo}\label{HHH1}
Assume Hypothesis~\ref{HHH} and that
\begin{equation}\label{normQka}
\sup_{a\in\mathbb Z^d}\left\Vert Q_{k,a}\right\Vert_{\mathcal B}=\mathcal O(\mathfrak a_n^{-d}) \, ,
\end{equation}
and that, for all $\eta\in[0;1]$,
\begin{equation}
\left\Vert 
Q'_{k,a,b}:=Q_{k,b}-Q_{k,a}\right\Vert_{\mathcal B} = \mathcal O\left(|b-a|^{\eta}{\mathfrak a}_k^{-d-\eta}\right)\, ,\label{Q'majo1}
\end{equation}
and
\begin{equation}
\left\Vert Q''_{k,a,b}:=
Q_{k,b-a}-Q_{k,b}-Q_{k,-a}+Q_{k,0}\right\Vert_{\mathcal B} = \mathcal O\left((|a||b|)^{\eta}{\mathfrak a}_k^{-d-2\eta}\right)\, ,\label{Q'majo}
\end{equation}
uniformly in $a,b\in\mathbb Z^d$.
\end{hypo}
\subsection{Limit theorem for additive functionals of Birkhoff sums}
\begin{thm}\label{MAIN}
	Assume Hypothesis~\ref{HHH}. 
	Then
	$
	\left(\sum_{k=0}^{n-1}\mathbf 1_{\{S_k=0\}} / {\mathfrak A}_n\right)_{n\ge 1}
	$
	converges in distribution (and in the sense of moments), with respect to $\nu$, to $\Phi(0)
	 \mathcal Y$, where $\mathcal Y$ is a Mittag-Leffler distribution of index $\frac{\alpha-d}\alpha$, i.e.
	\[\mathbb E[\mathcal Y^N]:=N!\frac{\Gamma(1+\frac{\alpha-d}\alpha)^N}{\Gamma(1+N\frac{\alpha-d}\alpha)}\, .
	\]
	If furthermore $\sum_{\ell\in\mathbb Z^d}|1+|\ell||^{\eta}|\beta_\ell|<\infty$
	with $\eta:=\frac{\alpha+\varepsilon-d}2$ for some $\varepsilon\in(0,1/2)$ and 
	$\sum_{\ell\in\mathbb Z^d}\beta_\ell=0$, and if Hypothesis~\ref{HHH1} holds true, then
	$
	\left(\sum_{k=0}^{n-1}\beta_{S_k} /\sqrt{{\mathfrak A}_n}\right)_{n\ge 1}
	$
	converges in distribution (and in the sense of moments), with respect to $\nu$, to $\sqrt{\sigma_\beta^2\Phi(0)\mathcal Y}\mathcal N$
	where $\mathcal N$ is a random variable, independent of $\mathcal Z$, with standard gaussian and where 
	\begin{align}\label{sigma21}
	\sigma_\beta^2&:=\sum_{k\in\mathbb Z}\sum_{a,b\in\mathbb Z^d} \beta_a\beta_b\nu\left(S_{|k|}=b-a \right)\\
	&=\sum_{k\in\mathbb Z}\sum_{a,b\in\mathbb Z^d} \beta_a\beta_b\left(\nu\left(S_{|k|}=b-a \right)-\nu\left(S_{|k|}=b\right)-\nu\left(S_{|k|}=-a\right)+\nu\left(S_{|k|}=0\right)\right)\, .\label{sigma22}
	\end{align}
\end{thm}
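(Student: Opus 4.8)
The plan is to prove Theorem~\ref{MAIN} by the method of moments, following the strategy of \cite{PTho1} but adapting it to the weaker operator estimate~\eqref{Qk0}. For the first part, I would compute $\mathbb{E}_\nu\left[\left(\sum_{k=0}^{n-1}\mathbf 1_{\{S_k=0\}}\right)^N\right]$ by expanding the product and summing over ordered tuples $0\le k_1\le k_2\le\dots\le k_N\le n-1$. Each term $\nu(S_{k_1}=0,\dots,S_{k_N}=0)$ can be written, using the Markov/transfer operator formalism, as $\mathbb E_\nu\left[Q_{k_1,0}Q_{k_2-k_1,0}\cdots Q_{k_N-k_{N-1},0}\mathbf 1_\Delta\right]$ (here using that $S_{k_{j+1}}-S_{k_j}$ is a Birkhoff sum over $F^{k_j}$ and that $\{S_{k_{j+1}}=0\}=\{S_{k_j}=0\}\cap\{S_{k_{j+1}}-S_{k_j}=0\}$). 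The key point is that~\eqref{Qk0} lets me replace each factor $Q_{m,0}$ by $\Phi(0)\mathfrak a_m^{-d}\mathbb E_\nu[\cdot]$ up to an $o(\mathfrak a_m^{-d})$ error in $\mathcal L(\mathcal B\to L^1)$; since after applying one such factor we land in the one-dimensional space spanned by $\mathbf 1_\Delta\in\mathcal B$, the errors telescope controllably provided~\eqref{normQk0} is used to bound the non-leading factors. Summing the resulting product $\Phi(0)^N\prod_j\mathfrak a_{k_{j+1}-k_j}^{-d}$ over the simplex of $k_j$'s and using that $(\mathfrak a_n)$ is $(1/\alpha)$-regularly varying with $\mathfrak A_n\to\infty$, a standard Karamata/Riemann-sum computation yields $N!\,\Phi(0)^N\mathfrak A_n^N\,\Gamma(1+\tfrac{\alpha-d}{\alpha})^N/\Gamma(1+N\tfrac{\alpha-d}{\alpha})$ to leading order, which is exactly $\mathfrak A_n^N\,\mathbb E[(\Phi(0)\mathcal Y)^N]$. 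Convergence of all moments then gives convergence in distribution to the Mittag--Leffler law.

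For the second part I would again use moments, now of $\sum_{k=0}^{n-1}\beta_{S_k}/\sqrt{\mathfrak A_n}$. Writing $\beta_{S_k}=\sum_{a}\beta_a\mathbf 1_{\{S_k=a\}}$ and expanding the $N$-th power, one gets sums over tuples $(k_1,\dots,k_N)$ and labels $(a_1,\dots,a_N)$ of quantities $\nu(S_{k_1}=a_1,\dots,S_{k_N}=a_N)\prod\beta_{a_j}$. Exactly as in \cite{PTho1}, the heuristic is that the $\beta$-sum behaves like $\sqrt{\sigma_\beta^2\Phi(0)\mathcal Y}\,\mathcal N$: odd moments vanish asymptotically and the $2N$-th moment is dominated by pairings of the indices $k_1,\dots,k_{2N}$ into $N$ close pairs, with the local contribution of each pair governed by $\sigma_\beta^2$ (the variance computed via the summable series~\eqref{sigma21}) and the global combinatorial/scaling structure reproducing the occupation-time variable $\mathcal Y$. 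The role of Hypothesis~\ref{HHH1} is to make this rigorous: the cancellation $\sum_a\beta_a=0$ means a single ``free'' index $a_j$ forces a difference operator, so one replaces $Q_{m,a_j}$ by $Q_{m,a_j}-Q_{m,0}$, and~\eqref{Q'majo1} gives the gain $|a_j|^\eta\mathfrak a_m^{-d-\eta}$; a pair of nearby free indices that are not yet ``matched'' forces a second difference and~\eqref{Q'majo} gives $(|a_i||a_j|)^\eta\mathfrak a_m^{-d-2\eta}$. The integrability assumption $\sum_\ell(1+|\ell|)^\eta|\beta_\ell|<\infty$ with $\eta=\tfrac{\alpha+\varepsilon-d}{2}$ is precisely what is needed to sum the $|a_j|^\eta$ weights against $\beta$, while the extra power $\mathfrak a_m^{-\eta}$ (resp. $\mathfrak a_m^{-2\eta}$) provides summability over the ``long'' time gaps where no pairing has yet occurred.

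Concretely, the main steps are: (i) set up the transfer-operator expansion of joint probabilities $\nu(S_{k_1}=a_1,\dots)$ in terms of products of $Q_{m,a}$ over consecutive gaps; (ii) for the first (indicator) part, substitute~\eqref{Qk0}, control error terms via~\eqref{normQk0} and the rank-one structure, and evaluate the simplex sum by regular variation to identify the Mittag--Leffler moments; (iii) for the null-integral part, organize the expansion by which consecutive pairs of indices $(k_j,k_{j+1})$ are ``close'' (say within a window chosen as an intermediate scale) versus ``far'', using~\eqref{Q'majo1}--\eqref{Q'majo} to show that any unpaired free label decays, so that only the full pairings into $N$ blocks survive; (iv) inside a close block, sum over the label differences to recover $\sigma_\beta^2$ from the telescoped series, checking that~\eqref{sigma21} equals~\eqref{sigma22} using $\sum_a\beta_a=0$; (v) reassemble the surviving terms as a simplex sum over the block positions, which by step (ii)'s computation gives $(2N-1)!!\,(\sigma_\beta^2\Phi(0))^N$ times the $N$-th Mittag--Leffler moment — i.e. the moments of $\sqrt{\sigma_\beta^2\Phi(0)\mathcal Y}\,\mathcal N$ with $\mathcal N$ independent standard Gaussian; (vi) conclude by the method of moments (the Mittag--Leffler law and hence the mixed limit are moment-determinate).

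The hard part will be step (iii)–(iv): bookkeeping the cancellations so that every ``incomplete'' configuration of labels is shown to be negligible. Because~\eqref{Qk0} only holds in $\mathcal L(\mathcal B\to L^1)$ and not in $\mathcal L(\mathcal B)$, one cannot freely compose the leading-order expansions of several consecutive operators; the argument must be arranged so that an $L^1$-error is produced only once, at the end of a chain, after all intermediate factors have been applied in $\mathcal B$ using the uniform bounds~\eqref{normQka} and the difference bounds~\eqref{Q'majo1}--\eqref{Q'majo}. Managing this ordering — i.e. always keeping $\mathbf 1_\Delta\in\mathcal B$ available on the right and peeling operators from the left while tracking which differences have been ``spent'' — together with the delicate choice of the intermediate window separating close from far index-pairs, is the technical crux; the regular-variation asymptotics and the final moment identification are then comparatively routine.
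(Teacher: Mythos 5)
Your treatment of the first part and your overall architecture for the second part (method of moments; operator chains $Q_{k_N-k_{N-1},a_N-a_{N-1}}\circ\cdots\circ Q_{k_1,a_1}$; using $\sum_a\beta_a=0$ to force difference operators; showing that only the full pairings survive via~\eqref{Q'majo1}--\eqref{Q'majo} and the summability of $(1+|a|)^\eta|\beta_a|$; Carleman to conclude) coincide with the paper's proof. You also correctly locate the crux: \eqref{Qk0} lives only in $\mathcal L(\mathcal B\rightarrow L^1(\nu))$, so the leading-order expansions of consecutive operators cannot be composed.

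However, your proposed resolution of that crux --- ``arrange the argument so that an $L^1$-error is produced only once, at the end of a chain, after all intermediate factors have been applied in $\mathcal B$'' --- does not work, and this is a genuine gap. To identify the limit (not merely bound the moments), you must extract the rank-one main part $\Phi(0)\mathfrak a_{k}^{-d}\mathbb E_\nu[\cdot]$ from \emph{each} of the $N/2$ factors $Q_{k,0}$ in the surviving chain $\mathbb E_\nu[\bar Q(Q_{\cdot,0}(\cdots\bar Q(Q_{\cdot,0}(\mathbf 1))\cdots))]$, where $\bar Q_k:=\sum_{a,b}\beta_a\beta_bQ''_{k,a,b}$; if you keep the intermediate $Q_{k,0}$'s only through their $\mathcal L(\mathcal B)$ bound $\mathcal O(\mathfrak a_k^{-d})$ you get the right order $\mathcal O(\mathfrak A_n^{N/2})$ but no constant. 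But each extraction leaves an error controlled only in $L^1$, and the subsequent operators acting on an $L^1$ function admit only $\mathcal O(1)$ bounds (they are $L^1$-contractions up to constants), so summing the remaining gaps produces factors of order $n$ rather than $\mathfrak A_n$ or $\mathcal O(1)$: the error terms blow up no matter where in the chain you place the single $L^1$ step. The paper's missing ingredient is an interpolation: for the composition $\mathbb E_\nu[Q''_{k',a,b}(Q_{k,0}(h))]$ one has both a bound $\mathcal O(|a|^{\eta'}|b|^{\eta'}\mathfrak a_{k'}^{-\alpha-2\varepsilon'}\mathfrak a_k^{-d}\Vert h\Vert_{\mathcal B})$ purely in $\mathcal L(\mathcal B)$ (summable in $k'$ but not $o(\mathfrak a_k^{-d})$) and a bound $\mathcal O(\Vert h\Vert_{\mathcal B})\,o(\mathfrak a_k^{-d})$ using that $\mathbb E_\nu\circ Q''_{k',a,b}$ is bounded on $L^1$ together with \eqref{Qk0} (which is $o(\mathfrak a_k^{-d})$ but not summable in $k'$); taking a geometric mean of the two with suitable exponents yields an estimate that is simultaneously $o(\mathfrak a_k^{-d})$ and $\mathcal O(\mathfrak a_{k'}^{-\alpha-\varepsilon'})$, and \emph{this} can be applied inductively along the chain. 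Without this (or an equivalent device), steps (iii)--(v) of your plan cannot be completed; note also that the slack between $\eta'$ and $\eta$ needed for the interpolation is precisely why the hypothesis is stated with the margin $\varepsilon\in(0,1/2)$.
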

\begin{rmk}
The summability assumption of $\beta_\ell$ appearing in Theorem~\ref{MAIN} is to our knowledge the optimal one even in the case of additive observables of random walks with i.i.d. increments.
\end{rmk}
\begin{rmk}
It follows from our assumptions that, if $\beta$ is not identically null, 
only the second sum~\eqref{sigma22} defining $\sigma^2_\beta$ is absolutely convergent in $k,a,b$. Indeed \[\nu(S_{k}=b-a)-\nu(S_{k}=b)-\nu(S_k=-a)+\nu(S_k=0)=
	\mathbb E_\nu[Q''_{k,a,b}(\mathbf 1)]
	\]
 is summable in $(k,a,b)\in\mathbb N\times\mathbb Z^d\times\mathbb Z^d$, whereas $\nu(S_{k}=0)=\mathbb E_{\nu}[Q_{k,0}(\mathbf 1)]\sim\Phi(0)\mathfrak a_k^{-d}$ is not summable. The summability of~\eqref{sigma22}
	combined with the fact that
\[\forall k\ge 0,\quad \sum_{a,b\in\mathbb Z^d}\beta_a\beta_b\nu(S_k=b-a)=\sum_{a,b\in\mathbb Z^d}\beta_a\beta_b\mathbb E_\nu[Q''_{k,a,b}(\mathbf 1)]\]
implies the absolute convergence in $k$ of the sum appearing in the right hand side of~\eqref{sigma21}.
\end{rmk}

\begin{proof}[Proof of Theorem~\ref{MAIN}]
	Let $(\beta_\ell)_{\ell\in\mathbb Z^d}$ be as in the assumption of
	Theorem~\ref{MAIN}.  
We start by writing 
\begin{align}
\nonumber
\mathbb E_{\nu}&\left[\left(\sum_{k=0}^{n-1}\beta_{S_k}\right)^N\right]
=
\sum_{k_1,...,k_N=0}^{n-1}\mathbb E_{\nu}\left[\prod_{j=1}^N\beta_{ S_{k_j}}\right]\\
&\quad =\sum_{0\le k_1\le ...\le k_N\le n-1}c_{(k_1,...,k_N)}\sum_{a_1,...,a_N\in\mathbb Z^d}\mathbb E_\nu\left[\prod_{j=1} ^N\left(\beta_{a_j}\mathbf 1_{\{S_{k_j}=a_j\}}
\right)\right]\, ,
\label{momentN}
\end{align}
where we denote by $c_{(k_1,...,k_N)}$ the number of $N$-uples $(k'_1,...,k'_N)\in\{0,...,n-1\}^N$
such that there exists a permutation $\sigma\in\mathfrak S_N$
such that  $k'_i=k_{\sigma(i)}$ for all $i=1,...,N$. We observe that
\begin{align}
\mathbb E_\nu\left[\prod_{j=1} ^N\left(\beta_{a_j}\mathbf 1_{\{S_{k_j}=a_j\}}
\right)\right]
\nonumber& = 
\mathbb E_\nu\left[\prod_{j=1} ^N\left(\beta_{a_j}\mathbf 1_{\{S_{k_j-k_{j-1}}=a_j-a_{j-1}\}}\circ F^{k_{j-1}}
\right)\right]\\
& =\mathbb E_{\nu}\left[P^{k_N}\left(\prod_{j=1}^N\left(\beta_{a_j}\mathbf 1_{ \{S_{k_j-k_{j-1}}=a_j-a_{j-1}\}}\circ F^{k_{j-1}}\right)\right)\right]\, ,\label{muSk}
\end{align}
setting $a_0:=0$ and $P$ for the transfer operator of $F$ with respect to $\nu$, using the fact that $\mathbb E_\nu[\cdot]=\mathbb E_\nu[P^{k_N}(\cdot)]$.
Since $P^k(f.g\circ F^k)=gP^k(f)$, we observe that, 
for any $j=1,...,N$, for any $k_1\le...\le k_j$ and any $b_1,...,b_j\in\mathbb Z^d$, 
\begin{align}\label{Pprod}
P^{k_j}\left(
\prod_{i=1}^j\mathbf 1_{\{S_{k_i-k_{i-1}}=b_i\}}\circ F^{k_{i-1}}\right)=
P^{k_{j}-k_{j-1}}\left(\mathbf 1_{\{S_{k_j-k_{j-1}}=b_j\}}P^{k_{j-1}}
\left(
\prod_{i=1}^{j-1}\left(
\mathbf 1_{\{S_{k_i-k_{i-1}}=b_i\}}\circ F^{k_{i-1}}\right)\right)
	\right)\, .
\end{align}
It follows from~\eqref{muSk} and~\eqref{Pprod} that
\begin{align}\label{nuprod}
\mathbb E_\nu\left[\prod_{j=1} ^N\left(\beta_{a_j}\mathbf 1_{\{S_{k_j}=a_j\}}
\right)\right]
=\mathbb E_{\nu}\left[ \beta_{a_N}Q_{k_{N}-k_{N-1},a_N-a_{N-1}}(\cdots (\beta_{a_2}Q_{k_2-k_1,a_2-a_1}(\beta_{a_1}Q_{k_1,a_1}(\mathbf 1)))\cdots)  \right]\, .
\end{align}
For the first part of Theorem~\ref{MAIN}, we apply~\eqref{momentN} with  $\beta_\ell=\mathbf 1_{\{\ell=0\}}$ with~\eqref{nuprod}. In that case,  
applying repeatedly~\eqref{Qk0}, combined with~\eqref{normQk0} and~\eqref{HypBanach}, the right hand side of~\eqref{nuprod} becomes
\[
\mathbb E_{\nu}\left[ Q_{k_N-k_{N-1},0}(
...(Q_{k_1,0}(\mathbf 1)))
\right]=\left(\Phi(0)\right)^N \prod_{j=1}^N{\mathfrak a}_{k_j-k_{j-1}}^{-d}+\sum_{j=1}^No({\mathfrak a}_{k_j-k_{j-1}}^{-d}))\prod_{i\ne j}\mathcal O(a_{k_i-k_{i-1}}^{-d})
\]
and so that
\begin{align*}
\mathbb E_{\nu}\left[\left(\sum_{k=0}^{n-1}\mathbf 1_{\{S_k=0\}}\right)^N\right]&= o(\mathfrak A_n^{N})+N!\left(\Phi(0)\int_Mf\, d\mu\right)^N \sum_{1< k_1< ...< k_N\le n-1}\prod_{j=1}^N {\mathfrak a}_{k_j-k_{j-1}}^{-d}\\
&=o(\mathfrak A_n^{N})+ N! 
\mathfrak A_n^N\left[\left(\Phi(0)\int_Mf\, d\mu\right)^N  \frac{\Gamma(1+\frac{2-d}2)^N}{\Gamma(1+N\frac{2-d}2)}
+o(1)\right]\, ,
\end{align*}
using~\cite[Lemma 2.7]{PTho1} for the last estimate. This  ends the proof of the first part of Theorem~\ref{MAIN}. 

Now let us prove the second part. We assume from now on that
 $\sum_{a\in\mathbb Z^d}\beta_a=0$ and that
 $\sum_{a\in\mathbb Z^d}(1+|a|^{\eta})|\beta_a|<\infty$ with $\eta:=\frac{\alpha-d+\varepsilon}2$ for some $\varepsilon\in(0;\frac 12)$.
Recall that it follows from~\eqref{momentN} combined with~\eqref{nuprod}
that
\begin{align}\nonumber
\mathbb E_{\nu}\left[\left(\sum_{k=0}^{n-1}\beta_{S_k}\right)^N\right]
&
=\sum_{0\le k_1\le ...\le k_N\le n-1}c_{(k_1,...,k_N)}\sum_{a_1,...,a_N\in\mathbb Z^d}
\\
&\mathbb E_{\nu}\left[ \beta_{a_N}Q_{k_{N}-k_{N-1},a_N-a_{N-1}}(\cdots
(\beta_{a_2}Q_{k_2-k_1,a_2-a_1}(\beta_{a_1}Q_{k_1,a_1}(\mathbf 1)))\cdots)  \right]\, .\label{momentN'}
\end{align}
In Formula~\eqref{momentN'}, we decompose each $Q_{k,a}$ in 
$Q^{(0)}_{k,a}
+Q^{(1)}_{k,a}$, with
$Q_{k,a}^{(0)}:=Q_{k,0}$ and $Q_{k,a}^{(1)}:=Q'_{k,a}:=Q'_{k,0,a}=Q_{k,a}-Q_{k,0}$.
Thus
\begin{align}
&\mathbb E_{\nu}\left[\left(\sum_{k=0}^{n-1}\beta_{S_k}\right)^N\right]  = 
\sum_{\varepsilon_1,...,\varepsilon_N\in\{0,1\}} H_{\varepsilon_1,...,\varepsilon_N}^{(n,N)}\, ,
\label{momentNbis}
\end{align}
with

\begin{align*}
H_{\varepsilon_1,...,\varepsilon_N}^{(n,N)}
&
=\sum_{0\le k_1\le ...\le k_N\le n-1}c_{(k_1,...,k_N)}H_{\mathbf k,\boldsymbol{\varepsilon}}^{n,N}
\end{align*}
setting $\mathbf k=(k_1,...,k_N)$,  $\boldsymbol{\varepsilon}=(\varepsilon_1,...,\varepsilon_N)$ and
\begin{align*}
&H_{\mathbf k,\boldsymbol{\varepsilon}}^{n,N}:=
\sum_{a_1,...,a_N\in\mathbb Z^d}
\mathbb E_{\nu}\left[ \beta_{a_N}Q^{(\varepsilon_N)}_{k_{N}-k_{N-1},a_N-a_{N-1}}(\cdots
(\beta_{a_2}Q_{k_2-k_1,a_2-a_1}^{(\varepsilon_1)}(\beta_{a_1}Q_{k_1,a_1}(\mathbf 1)))\cdots)  \right]\\
&= 
\sum_{a_N\in\mathbb Z^d}
\mathbb E_{\nu}\left[\beta_{a_N}\left( \sum_{a_{N-1}\in\mathbb Z^d}\beta_{a_{N-1}}Q_{k_{N}-k_{N-1},a_N-a_{N-1}}^{(\varepsilon_N)}\left(\cdots \left(\sum_{a_1\in\mathbb Z^d}\beta_{a_1}Q_{k_{2}-k_{1},a_2-a_{1}}^{(\varepsilon_2)}\left(Q_{k_{1},a_1}^{(\varepsilon_1)}(\mathbf 1)\right)\right)\cdots\right) \right) \right]\, .
\end{align*}
We observe that $H^{(n,N)}_{\varepsilon_1,...,\varepsilon_N}=0$ as soon as 
$\varepsilon_N=0$ or if there exists $j_0=1,...,N-1$ such that $\varepsilon_{j_0}=\varepsilon_{j_0+1}=0$
(since $\sum_{a_{j_0}\in\mathbb Z^d}\beta_{a_{j_0}}=0$).
Thus, we restrict our study to the case of the $\varepsilon_j's$ 
for which the $j_0$'s such that $\varepsilon_{j_0}=0$ are isolated and
do not include $N$. 
Let such an $\boldsymbol{\varepsilon}=(\varepsilon_1,...,\varepsilon_N)$. 
Observe that there are at most $N/2$ indices $j$'s such that
$\varepsilon_j=0$. 
We set, by convention, $\varepsilon_{N+1}=\varepsilon_{0}=0$. 
We will prove that
\begin{equation}\label{dominH}
H_{\varepsilon_1,...,\varepsilon_N}^{n,N}=
 o\left(\mathfrak A_n^{\frac N2}\right)\quad\mbox{unless if }\#\{j:\varepsilon_j=0\}=\frac N2\, ,
\end{equation}
i.e. $H_{\varepsilon_1,...,\varepsilon_N}^{n,N}=
o\left(\mathfrak A_n^{\frac N2}\right)$ unless if $N$ is even and if $(\varepsilon_1,...,\varepsilon_N)=(0,1,...,0,1)$. 
For reader's convenience, we first give a short proof of this estimate in a particular case. A proof of this estimate in the general case $\sum_{a\in\mathbb Z^d} |a|^{\frac{\alpha-d+\varepsilon}2}|\beta_a|<\infty$ with  $\eta=\frac{\alpha-d+\varepsilon}2$, with $\varepsilon\in(0;\frac 12)$ is given in Appendix~\ref{Append}. 
\begin{itemize}
\item \underline{A short proof of~\eqref{dominH} in a particular case}.
Assume in this item only that $\alpha<d+1$ and 
$\sum_{a\in\mathbb Z^d}(1+|a|)^{2\bar\eta}|\beta_a|<\infty$ with $\alpha-d<\bar\eta<1$.
It follows from Hypothesis~\ref{HHH1} that 
\begin{align*}
Q^{(\varepsilon_j)}_{k_{j}-k_{j-1},a_{j}-a_{j-1}}
	&= \mathcal O\left(
|a_j-a_{j-1}|^{\varepsilon_j\bar\eta} \mathfrak a_{k_{j}-k_{j-1}}^{-d-\varepsilon_j\bar\eta}\right)\\
	&= \mathcal O\left(
((1+|a_j|)(1+|a_{j-1}|))^{\varepsilon_j\bar\eta} \mathfrak a_{k_{j}-k_{j-1}}^{-d-\varepsilon_j\bar\eta}\right)
\, ,
\end{align*}
and so that
\begin{align*}
\left|H_{\varepsilon_1,...,\varepsilon_N}^{n,N}\right|
&=\mathcal O\left(\left(\sum_{a\in\mathbb Z^d}(1+|a|)^{2\bar\eta}|\beta_a|\right)^N\prod_{j=1}^N
\sum_{k_j=0}^{n-1}\mathfrak a_k^{-d-\varepsilon_j\bar\eta}\right)\\
&=\mathcal O\left(\mathfrak A_n^{\#\{j:\varepsilon_j=0\}}
\right)\, ,
\end{align*}
where we used the fact that $\sum_{a\in\mathbb Z^d}(1+|a|)^{2\bar\eta}|\beta_a|<\infty$ combined with 
$\sum_{k\ge 0}\mathfrak a_k^{-d-\bar\eta}<+\infty$, since $d+\bar\eta>\alpha$ and since $(\mathfrak a_n)_n$  is $\frac 1 {\alpha}$-regularly varying. This concludes the proof of~\eqref{dominH}.
\end{itemize}

In particular, Formula~\eqref{dominH} ensures that 
\begin{equation}\label{momentimpair}
\mathbb E_{\nu}\left[\left(\sum_{k=0}^{n-1}\beta_{S_k}\right)^N\right]=o\left(\mathfrak A_n^{\frac N2}\right)\quad\mbox{if }N\mbox{ is odd}\, ,
\end{equation}
and that
\begin{equation}\label{momentpair}
\mathbb E_{\nu}\left[\left(\sum_{k=0}^{n-1}\beta_{S_k}\right)^N\right]=H^{(n,N)}_{0,1,...,0,1}+o\left(\mathfrak A_n^{\frac N2}\right)\quad\mbox{if }N\mbox{ is even}\, .
\end{equation}
Assume from now on that $N$ is even, then 
\begin{align}
\nonumber&H^{(n,N)}_{0,1,...,0,1}=\sum_{0\le k_1\le ...\le k_N\le n-1}c_{k_1,...,k_N}\sum_{a_1,...,a_N\in\mathbb Z^d}\left(\prod_{j=1} ^N\beta_{a_j}\right)
\\
\nonumber&
\mathbb E_{\nu}[Q'_{k_N-k_{N-1},a_N-a_{N-1}}(Q_{k_{N-1}-k_{N-2},0}
(...(Q'_{k_2-k_1,a_2-a_1}(Q_{k_1,0}(\mathbf 1)))...))]
\end{align}
Hence
\begin{align}
H^{(n,N)}_{0,1,...,0,1}&= \sum_{0\le k_1\le ...\le k_N\le n-1}\!\!\!\!\! c_{k_1,...,k_N}
\mathbb E_{\nu}[\bar Q_{k_N-k_{N-1}}(Q_{k_{N-1}-k_{N-2},0}
(...(\bar Q_{k_2-k_1}(Q_{k_1,0}(\mathbf 1)))...))] \, ,
\label{H101010}
\end{align}
with
\begin{align}\label{barQk1}
\bar Q_k&:=\sum_{a,b\in\mathbb Z^d}\beta_a\beta_bQ'_{k,b-a}=\sum_{a,b\in\mathbb Z^d}\beta_a\beta_bQ_{k,b-a}
=\sum_{a,b\in\mathbb Z^d}\beta_a\beta_b Q''_{k,a,b}\, , 
\end{align}
where we used the fact that $\sum_{a\in\mathbb Z^d}\beta_a=0$ and the notation $Q''_{k,a,b}=Q_{k,b-a}-Q_{k,b}-Q_{k,-a}+Q_{k,0}$ introduced in~\eqref{Q'majo}. 
Combining~\eqref{Q'majo} with $\eta=\frac{\alpha+\varepsilon-d}2$ with~\eqref{barQk1}, since $\sum_{a\in\mathbb Z^d}(1+|a|)^\eta|\beta_a|<\infty$, we infer that
\begin{equation}
\bar Q_k=\mathcal O\left({\mathfrak a}_k^{-\alpha-\varepsilon}\right)\quad\mbox{in }\mathcal L(\mathcal B)
\, ,\label{barQk}
\end{equation}
since $d+2\eta=\alpha+\varepsilon$, which ensures the summability of $\Vert \bar Q_k\Vert_{\mathcal B}$. 
The study of~\eqref{H101010} leads us to the question of estimating $\mathbb E_\nu[\bar Q_{k'}(Q_{k,0}(h))]$. Unfortunately we cannot compose  directly~\eqref{Q'majo} and~\eqref{Qk0}
since this last estimate is in $\mathcal L(\mathcal B\rightarrow L^1(\nu))$
and not in $\mathcal L(\mathcal B)$. But, proceeding in two steps, 
we will prove that
\begin{equation}\label{ESTI0}
\mathbb E_{\nu}[\bar Q_{k'}(Q_{k,0}(h))]-\Phi(0)\mathfrak a_k^{-d}\mathbb E_{\nu}\left[\bar Q_{k'}(\mathbf 1)\right]\mathbb E_\nu[h]
= \mathcal O\left(
\mathfrak a_{k'}^{-\alpha-\varepsilon'}\Vert h \Vert_{\mathcal B}\right)o(\mathfrak a_k^{-d})\, ,
\end{equation}
where $\varepsilon'\in(0,\frac 12)$ is small enough so that $\frac{(\alpha-d+2\varepsilon')(\alpha+\varepsilon')}{\alpha+2\varepsilon'}\le \alpha-d+\varepsilon=2\eta$.

First, dominating separately both terms, it follows from~\eqref{Q'majo}
with $\eta'=\frac{\alpha+2\varepsilon'-d}2\in(0,1)$ and from~\eqref{HypBanach} and~\eqref{normQk0} that
\begin{align}
&\mathbb E_{\nu}[Q''_{k',a,b}(Q_{k,0}(h))]-\Phi(0)\mathfrak a_k^{-d}\mathbb E_{\nu}\left[Q''_{k',a,b}(\mathbf 1)\right]\mathbb E_\nu[h]
=\mathcal O\left(|a|^{\eta'}|b|^{\eta'}\mathfrak a_{k'}^{-\alpha-2\varepsilon'}\mathfrak a_k^{-d}\Vert h\Vert_{\mathcal B}\right)\, ,\label{ESTI1}
\end{align}
since $-d-2\eta'=-\alpha-2\varepsilon'$. 
Second, it follows from the definition of $Q''_{k',a,b}$
that
\[
\mathbb E_\nu[Q''_{k',a,b}(h_0)]=\mathbb E_\nu\left[(\mathbf 1_{\{S_{k'}=b-a\}}-\mathbf 1_{\{S_{k'}=b\}}-\mathbf 1_{\{S_{k'}=-a\}}+\mathbf 1_{\{S_{k'}=0\}}).h_0\right]
=\mathcal O\left(\Vert h_0\Vert_{L^1(\nu)}\right)\, .
\]
This combined with~\eqref{Qk0} ensures that
\begin{align}
\nonumber&
\mathbb E_{\nu}[Q''_{k',a,b}(Q_{k,0}(h))]-\Phi(0)\mathfrak a_k^{-d}\mathbb E_{\nu}\left[Q''_{k',a,b}(\mathbf 1)\right]\mathbb E_\nu[h]\\
&\quad=\mathbb E_{\nu}\left[Q''_{k',a,b}(Q_{k,0}(h)-\Phi(0)\mathfrak a_k^{-d}\mathbb E_\nu[h])\right]
=\mathcal O(
\Vert h\Vert_{\mathcal B})o(\mathfrak a_k^{-d})\, .\label{ESTI2}
\end{align}
Thus, combining~\eqref{ESTI1} and~\eqref{ESTI2}, we obtain that
\begin{align*}
&
\mathbb E_{\nu}[Q''_{k',a,b}(Q_{k,0}(h))]-\Phi(0)\mathfrak a_k^{-d}\mathbb E_{\nu}\left[Q''_{k',a,b}(\mathbf 1)\right]\mathbb E_\nu[h]\\
&\quad= \left(\mathcal O(|a|^{\eta'}|b|^{\eta'}\mathfrak a_{k'}^{-\alpha-2\varepsilon'}\mathfrak a_{k}^{-d}\Vert h\Vert_{\mathcal B})\right)^{(\alpha+\varepsilon')/(\alpha+2\varepsilon')}\left(\mathcal O(
\Vert h\Vert_{\mathcal B})o(\mathfrak a_k^{-d})\right)^{\varepsilon'/(\alpha+2\varepsilon')}\\
&\quad= \mathcal O(|a|^{\frac {\eta'(\alpha+\varepsilon')}{\alpha+2\varepsilon'}}|b|^{\frac {\eta'(\alpha+\varepsilon)}{\alpha+2\varepsilon'}}\mathfrak a_{k'}^{-\alpha-\varepsilon'}\Vert h \Vert_{\mathcal B})o(\mathfrak a_k^{-d})\, .
\end{align*}
After summation over $a,b\in\mathbb Z^d$, we obtain~\eqref{ESTI0}, since $\frac {\eta'(\alpha+\varepsilon')}{\alpha+2\varepsilon'}\le\eta$ and since 
$\sum_{a\in\mathbb Z^d}(1+|a|^\eta) |\beta_a|<\infty$.

Using~\eqref{ESTI0}  inductively  in~\eqref{H101010} (combined with the fact that $\Vert Q_{k,0}\Vert_{\mathcal B}=\mathcal O(\mathfrak a_k^{-d})$ and that $\Vert\bar Q_{k}\Vert_{\mathcal B}$ is summable which follows from~\eqref{barQk}), we conclude that, when $N$ is even
\begin{align*}
H^{(n,N)}_{0,1,...,0,1}
&=\sum_{0\le k_1\le...\le k_N}c_{k_1,...,k_N}
\prod_{j=1}^{N/2}\left(\Phi(0) \left({\mathfrak a}_{k_{2j-1}-k_{2j-2}}^{-d}\mathbb E_{\nu}[\bar Q_{k_{2j}-k_{2j-1}}(\mathbf 1)]\right)\right)\\
&\quad +\mathcal O\left(  \left(\sum_{k=1}^n{\mathfrak a}_k^{-d}
\right)^{N/2-1}
\sum_{k=1}^n
o({\mathfrak a}_{k
}^{-d
})\right)\, .
\end{align*}
Recall that 
\[\sigma_\beta^2=\sum_{k\in\mathbb Z}\mathbb E_{\nu}\left[\bar Q_{|k|}(\mathbf 1)\right] =\mathbb E_\nu\left[\bar Q_0(\mathbf 1)\right]+2\sum_{k\ge 1}\mathbb E_{\nu}\left[\bar Q_{k}(\mathbf 1)\right]\, .
\]
Therefore, proceeding exactly as in~\cite[p. 1918-1919]{PTho1}, we obtain that
\begin{align*}
H^{(n,N)}_{0,1,...,0,1}
&=
\frac{\Gamma(1+\frac{\alpha-d}\alpha)^{N/2}}{\Gamma(1+\frac N2\frac{\alpha-d}\alpha)}\frac{N!}{2^{N/2}}
\left(\Phi(0)\sigma^2_\beta
\mathfrak A_n\right)^{N/2}+o\left(\mathfrak A_n^{ N/2}\right)\\
&= \mathfrak A_n^{N/2}
\mathbb E\left[ \left(\sqrt{\sigma_\beta^2\Phi(0)|\mathcal Z|}\mathcal N\right)^N\right]+o\left(\mathfrak A_n^{ N/2}\right)\, .
\end{align*}
This, combined with~\eqref{momentimpair} and~\eqref{momentpair},
ends the proof of the convergence of every moments. We conclude the convergence in distribution by the Carleman criterion~\cite{Schmudgen}.
\end{proof}

\subsection{Joint Limit theorem for additive functional of Birkhoff sums}
\begin{thm}\label{MAIN2}
Assume Assumptions~\ref{HHH} and~\ref{HHH1}.  	
Let	$\eta:=\frac{\alpha+\varepsilon-d}2$ for some $\varepsilon\in(0,1/2)$. 
Let $(\beta_a^{(0)})_{a\in\mathbb Z^d}$ and $(\beta_a^{(1)})_{a\in\mathbb Z^d}$ be two families of real numbers such that 
$\sum_{a\in\mathbb Z^d}(1+|a|)^\eta|\beta^{(j)}_a|<\infty$ and 
$\sum_{a\in\mathbb Z^d}\beta^{(1)}_a=0$. Then
	the following family of couples of processes
	\begin{equation}\label{jointprocess}
	\left(\left(\sum_{k=0}^{\lfloor nt\rfloor-1}\beta_{S_k}^{(0)} / {\mathfrak A}_n\right)_t,\left(\sum_{k=0}^{\lfloor nt\rfloor-1}\beta^{(1)}_{S_k} /\sqrt{{\mathfrak A}_ n}\right)_t\right)_{n\ge 1}
	\end{equation}
	converges in distribution, with respect to $\nu$, to
	$((\sum_{a\in\mathbb Z^d}\beta^{(0)}_a\mathcal L_t)_t,(\sigma_{\beta^{(1)}}B_{\mathcal L_t})_t)$,
	(in $(\mathcal D([0;T]))^2$ for all $T>0$ if $d=1$ and in $(\mathcal D([T_0;T]))^2$ for all $0<T_0<T$ if $d=2$), 
	where $\sigma^2_{\beta^{(1)}}$ is defined in Formula~\eqref{sigma21} of
	Theorem~\ref{MAIN} taking $\beta=\beta^{(1)}$, 
	where $ B$ is a Brownian motion and where $\mathcal L_t$ is the following process  
		\begin{itemize}
		\item if $\alpha>d$, $\mathcal L_t$ is the local time at 0 in the time interval $[0;t]$ of an centered $\alpha$-stable process $W$, independent of $B$, such that $W_1$ has density probability $\Phi$ with $\Phi(0)$ satisfying~\eqref{Qk0},
		\item if $\alpha=d$, $\mathcal L_t=\mathbf 1_{\{t>0\}}\mathcal L_1$, where $\mathcal L_1$
		is a random variable with exponential distribution with mean $\Phi(0)$.
	\end{itemize}
\end{thm}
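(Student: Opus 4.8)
The strategy is to upgrade the one-dimensional distributional convergence of Theorem~\ref{MAIN} to a joint functional convergence by the standard two-step recipe: (i) prove convergence of the finite-dimensional distributions of the pair of processes in~\eqref{jointprocess}, and (ii) establish tightness of each coordinate in the appropriate Skorokhod space. Since both coordinates are built from the \emph{same} underlying Birkhoff sum $S_k$, the key is that the limiting objects are themselves coupled through the local time $\mathcal L_t$: heuristically, $\sum_{k<\lfloor nt\rfloor}\beta^{(0)}_{S_k}$ concentrates near the times when $S_k$ returns near $0$ (these contribute $\Phi(0)\sum_a\beta^{(0)}_a$ per unit of local time), while $\sum_{k<\lfloor nt\rfloor}\beta^{(1)}_{S_k}$ with $\sum_a\beta^{(1)}_a=0$ is a martingale-like object whose quadratic variation is asymptotically $\sigma^2_{\beta^{(1)}}\Phi(0)$ times the same local time, hence the time-changed Brownian motion $B_{\mathcal L_t}$ with $B$ independent of $\mathcal L$.

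First I would handle the finite-dimensional distributions. Fix $0< t_1<\dots<t_m$ and reals $\lambda_0,\dots,\lambda_0',\lambda_1,\dots$; I would compute the joint moments
\[
\mathbb E_\nu\left[\prod_{i}\left(\frac{1}{\mathfrak A_n}\sum_{k=0}^{\lfloor nt_i\rfloor-1}\beta^{(0)}_{S_k}\right)^{p_i}\left(\frac{1}{\sqrt{\mathfrak A_n}}\sum_{k=0}^{\lfloor nt_i\rfloor-1}\beta^{(1)}_{S_k}\right)^{q_i}\right]
\]
by exactly the same expansion as in the proof of Theorem~\ref{MAIN}: expand each product into sums over ordered times $k_1\le\dots\le k_P$ and labels $a_1,\dots,a_P$, rewrite via~\eqref{nuprod} as compositions of the operators $Q_{k,a}$, and decompose each $Q_{k,a}=Q^{(0)}_{k,a}+Q^{(1)}_{k,a}$. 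The combinatorics is governed by which factors carry a $\beta^{(1)}$ (these must come in adjacent $(0,1)$-blocks, using $\sum_a\beta^{(1)}_a=0$ and the same vanishing argument as for~\eqref{dominH}), while the $\beta^{(0)}$-factors behave as in the first part of Theorem~\ref{MAIN}. Keeping track of the different ``partial'' ranges $[0,\lfloor nt_i\rfloor)$ replaces the single simplex sum by sums over sub-simplices, and by the same application of~\cite[Lemma 2.7]{PTho1} (together with~\eqref{ESTI0} for the $(0,1)$-blocks) these converge to the mixed moments of $((\sum_a\beta^{(0)}_a\mathcal L_{t_i})_i,(\sigma_{\beta^{(1)}}B_{\mathcal L_{t_i}})_i)$; identification of the limit uses the known moment structure of the Mittag--Leffler / local-time process and the fact that a time-changed Brownian motion with independent clock has the Gaussian-mixture moments already computed in Theorem~\ref{MAIN}. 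Carleman's criterion then pins down the finite-dimensional laws.

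The hard part will be tightness, for two reasons. The limiting local-time process has discontinuous sample paths when $\alpha=d$ (it is a pure jump at $0$: $\mathcal L_t=\mathbf 1_{\{t>0\}}\mathcal L_1$), so one can only hope for convergence in $\mathcal D([T_0,T])$ for $d=2$, which already forces the $T_0>0$ restriction; on $[T_0,T]$ the process is eventually constant and tightness is comparatively soft, coming from a moment bound on increments $\mathbb E_\nu[(\sum_{\lfloor ns\rfloor\le k<\lfloor nt\rfloor}\beta^{(j)}_{S_k})^{2N}]=\mathcal O((\mathfrak A_{\lfloor nt\rfloor}-\mathfrak A_{\lfloor ns\rfloor})^{N})$ plus the usual Billingsley moment criterion. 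For $d=1$ ($\alpha>d$) one needs genuine $\mathcal D([0,T])$-tightness of both coordinates: for the first coordinate this is the tightness of (a multiple of) a local time, which I would get from the increment moment estimate above combined with continuity of the limit; for the second, $f$-null-integral coordinate, I would use the same $2N$-th moment bound on increments over $[\lfloor ns\rfloor,\lfloor nt\rfloor)$ — which follows from the Hypothesis~\ref{HHH1} estimates applied on a time-window exactly as in the proof of~\eqref{H101010}–\eqref{ESTI0}, now with $\mathfrak A_n$ replaced by $\mathfrak A_{\lfloor nt\rfloor}-\mathfrak A_{\lfloor ns\rfloor}$ — to invoke Billingsley's criterion for $\mathcal D([0,T])$. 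Finally, the independence of $B$ from $\mathcal L$ in the limit is not an extra hypothesis but a consequence of the moment computation: the $(0,1)$-block structure forces the $\beta^{(1)}$-contributions to factor, in the limit, into a Gaussian piece (from the $\bar Q$-pairings) multiplied by a local-time piece (from the $Q_{k,0}$-factors) with no cross terms, which is exactly the statement that $B_{\mathcal L_t}$ has an independent clock. I would close by noting that once finite-dimensional convergence and tightness of each coordinate are in hand, joint tightness in $(\mathcal D)^2$ is automatic, and the theorem follows.
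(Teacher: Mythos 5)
Your proposal follows essentially the same route as the paper: convergence of the joint moments via the operator decomposition into the $\widetilde Q^{(\varepsilon)}$'s and the admissible-$(\boldsymbol\gamma,\boldsymbol\varepsilon)$ combinatorics (with the isolated $(1,1)$-blocks forcing the factorization into a Gaussian factor times a local-time factor, hence the independent clock), followed by tightness through Billingsley's two-increment moment criterion and the regular-variation estimate $\mathfrak A_n^{-1}\sum_{k=\lfloor nr\rfloor}^{\lfloor nt\rfloor-1}\mathfrak a_k^{-d}=\mathcal O((t-r)^{\alpha_0})$. The only step you leave implicit that the paper makes explicit is the identification of the limiting local-time moments $E'_n$: for $\alpha>d$ this is done by comparing with a non-arithmetic i.i.d.\ random walk whose rescaled local time at $0$ is known to converge to $\mathcal L$, and for $\alpha=d$ by the exponential moments coming from Theorem~\ref{MAIN} together with the observation that increments away from $t=0$ are negligible.
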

\begin{proof}[Proof of Theorem~\ref{MAIN2}]
	We start by proving the convergence of the finite distributions and we will then prove the tightness.
	For the convergence of the finite distributions, we use again the convergence of moments.
	It is enough to study the asymptotic behaviour as $n$ goes to infinity
	of every moments of the following form
	\[
	E_n:=\mathbb E_{\nu}\left[
	\prod_{j=1}^M\left(\sum_{k_j=\lfloor n t_{j-1}\rfloor}^{\lfloor n t_j\rfloor-1}\beta^{(0)}_{S_k}\right)^{N^{(0)}_j}\left(\sum_{k'_j=\lfloor n t_{j-1}\rfloor}^{\lfloor n t_j\rfloor-1}\beta_{S_k}^{(1)}\right)^{N^{(1)}_j}\right]\, ,
	\]
	for any $M\in\mathbb N^*$, any $N_j^{(0)},N_j^{(1)}\in\mathbb N$, any $t_0=0<t_1<...<t_M$. We set $\Gamma_k:=\sum_{j=1}^{M}N^{(k)}_j$ for $k\in\{0,1\}$ and will prove that
	\begin{align*}
	\lim_{n\rightarrow +\infty}\mathfrak A_n^{-\Gamma_0-\frac{\Gamma_1}2}E_n&=(\Phi(0))^{\Gamma_0+\frac{\Gamma_1}2}\left(
	\sum_{a\in\mathbb Z^d}\beta^{(0)}_a\right)^{\Gamma_0}\sigma_{\beta^{(1)}}^{\Gamma_1} \mathbb E\left[
	\prod_{j=1}^M \left(\mathcal L_{t_j}-\mathcal L_{t_{j-1}}\right)^{N^{(0)}_j+\frac{N^{(1)}_j}2}
	\right]\prod_{j=1}^M\mathbb E[\mathcal N^{N^{(1)}_j}]\, .
	\end{align*}
	We set $m_i:=\sum_{r=1}^{i}(N^{(0)}_r+N^{(1)}_r)$ and $N:=m_M$. 
	Proceeding as in the proof of Theorem~\ref{main1}, we observe that $E_n$ can be rewritten as follows
	\begin{align}
	\sum_{\gamma_1,...,\gamma_{N}}\sum_{(k_1, ..., k_{N})\in \mathcal K_n}&\prod_{i=1}^{N}c_{(k_{m_{i-1}+1}
		,...,k_{m_i})}^{(\gamma_{m_{i-1}+1}
		,...,\gamma_{m_i})}\sum_{a_1,...,a_{N}\in\mathbb Z^d}\left(\prod_{j=1} ^{N}\beta_{a_j}^{(\gamma_j)}\right)\nu\left(\forall i=1,...,N,\, S_{k_i}=a_i\right)\, ,
	\label{momentN2}
	\end{align}
	where $\mathcal K_n$ is the set of increasing $N$-uples $(k_1,...,k_{N})$ such that $\lfloor nt_{j-1}\rfloor\le k_m\le
	\lfloor nt_{j}\rfloor -1$ for all $m$ such that $m_{j-1}<m\le m_j$, where the first sum is taken over $\gamma_j\in\{0,1\}$ such that, for all $\gamma\in\{0,1\}$ and all $i=1,...,M$,
	$\#\{j=m_{i-1}+1,...,m_i\, :\, \gamma_j=\gamma\}=N^{(\gamma)}_i$ and
	where $c_{(k_{m_{i-1}+1}
		,...,k_{m_i})}^{(\gamma_{m_{i-1}+1}
		,...,\gamma_{m_i})}$ is the number of $(\gamma'_{m_{i-1}+1},
	,...,\gamma'_{m_i},k'_{m_{i-1}+1}
	,...,k'_{m_i})\in\{0,1\}^{N^{(1)}_i+N^{(2)}_i}\times\{\lfloor nt_{i-1}\rfloor,...,\lfloor nt_i\rfloor-1\}^{N_i^{(0)}+N_i^{(1)}}$
	such that there exists a permutation $\sigma$ of $\{m_{i-1}+1,...,m_i\}$
	such that $\gamma'_{\sigma(r)}=\gamma_r$ and $k'_{\sigma(r)}=k_r$ for all $r\in\{m_{i-1}+1,...,m_i\}$.
	Furthermore  we use~\eqref{nuprod} to express $\nu(\forall i=1,...j-1,N,\ {k_i}=a_i)$ using a composition of operators $Q_{k_j-k_{j-1},a_j-a_{j-1}}$ and, as in Appendix~\ref{Append}, for each $j$, we decompose each $Q_{k_j-k_{j-1},a_j-a_{j-1}}$ in a sum of $\widetilde Q^{(\varepsilon)}$. 
	This leads to
	\begin{align}
	E_n:=\sum_{\boldsymbol\gamma=(\gamma_1,...,\gamma_{N})}\sum_{\boldsymbol\varepsilon=(\varepsilon_1,...,\varepsilon_{N})} \tilde H^{(n)}_{\boldsymbol\varepsilon
	}(\boldsymbol\gamma
)\, ,
	\label{momentNbis2}
	\end{align}
	with
	\begin{align*}
	&\tilde H^{(n)}_{\varepsilon_1,...,\varepsilon_{N}}(\gamma_1,...,\gamma_{N})
	=\sum_{(k_1, ..., k_{N})\in \mathcal K_n}
	\prod_{i=1}^Mc_{(k_{m_{i-1}+1}
		,...,k_{m_i})}^{(h_{m_{i-1}+1}
		,...,h_{m_i})}
	\sum_{a_1,...,a_{N}\in\mathbb Z^d}\\
	&
	\mathbb E_{\nu}\left[\beta_{a_N}^{(\gamma_N)}\left( 
	\beta_{a_{N-1}}^{(\gamma_{N-1})}\widetilde Q_{k_{N}-k_{N-1},a_{N-1},a_N}^{(\varepsilon_N)}\left(\cdots \left(
	\beta^{(\gamma_1)}_{a_1}\widetilde Q_{k_{2}-k_{1},a_{1},a_2}^{(\varepsilon_2)}\left(\widetilde Q_{k_{1},0,a_1}^{(\varepsilon_1)}(\mathbf 1)\right)\right)\cdots\right) \right) \right]\, ,
	\end{align*}
	with the use of the operators $\widetilde Q_{(k,a,b)}^{(\varepsilon)}$
	defined in Appendix~\ref{Append} and where the sum over $\varepsilon_1,...,\varepsilon_N$ is taken over $\varepsilon_2,...,\varepsilon_N\in\{0,1\}^2$ and  $\varepsilon_1\in\{(0,0),(0,1)\}$. We write $\varepsilon_j=(\varepsilon_{j,1},\varepsilon_{j,2})$. 
	Since $\sum_{a\in\mathbb Z^d}\beta_a^{(1)}=0$, 
	$H_{\varepsilon_1,...,\varepsilon_N}=0$ if there exists $j=1,...,N$
	such that $\gamma_j=1$ and $\varepsilon_{j,2}+\varepsilon_{j+1,1}=0$ (with convention $\varepsilon_{N+1,1}=0$). 
Therefore we assume from now on that $(\boldsymbol\gamma, \boldsymbol\varepsilon)$ is such that $\varepsilon_{j,2}+\varepsilon_{j+1,1}\ge \gamma_j$, for all $j$
(with the convention $\varepsilon_{N+1,1}=0$, and with $\varepsilon_{1,1}=0$) and we call {\bf admissible} such a pair $(\boldsymbol\gamma,\boldsymbol\varepsilon)$. 
Then
\begin{align*}
\tilde H^{(n)}_{\boldsymbol\varepsilon}(\boldsymbol\gamma)
&=\mathcal O\left(\sum_{(k_1, ..., k_{N})}\sum_{a_1,...,a_{N}\in\mathbb Z^d}
\prod_{j=1}^{N}\left(|\beta^{(\gamma_j)}_{a_j}|\left\Vert   \widetilde Q^{(\varepsilon_j)}_{k_j-k_{j-1},a_{j-1},a_j}\right\Vert_{\mathcal B}\right)\right)\\
&=\mathcal O\left(\sum_{(k_1, ..., k_{N})}\sum_{a_1,...,a_{N}\in\mathbb Z^d}
\prod_{j=1}^{N}\left(|\beta^{(\gamma_j)}_{a_j}|\, (1+|a_j|)^{\eta
} \mathfrak a_{k_j-k_{j-1}}^{-d-\eta_{j,1}\varepsilon_{j,1}-\eta_{j,2}\varepsilon_{j,2}}\right)\right)\, ,
\end{align*}
with $(\eta_{j,i})_{j,i}$ a sequence in $\{0,\eta\}$ such that
$\eta_{j,2}\varepsilon_{j,2}+\eta_{j+1,1}\varepsilon_{j+1,1}=\eta\gamma_j$
and such that
\begin{itemize}
	\item $(\eta_{j,1},\eta_{j,2})=\eta \varepsilon_j$ if $\varepsilon_j\ne (1,1)$,
	\item $\eta_{j,1}=\eta(1-\varepsilon_{j-1,2})$ if $\varepsilon_j=(1,1)$,
	\item $\eta_{j,2}=\eta$ if $\varepsilon_j=(1,1)$ and $\varepsilon_{j+1}\ne (1,0)$,
	\item $\eta_{j,2}=0$ if $\varepsilon_j=(1,1)$ and $\varepsilon_{j+1}= (1,0)$.
\end{itemize}
As seen in Appendix~\ref{Append}, we use the fact that there exists $u_0\in(0,1]$ such that $\sum_{k=0}^{n-1}\mathfrak a_k^{-d-\eta}=\mathcal O\left(\mathfrak A_n^{\frac {1-u_0}2}\right)$ and that $\sum_{k\ge 0}\mathfrak a_k^{-d-2\eta}<\infty$. Using the summability assumption on $\beta_a^{(\gamma)}$, we infer that
\begin{align*}
\tilde H^{(n)}_{\boldsymbol\varepsilon}(\boldsymbol\gamma)
&=\mathcal O\left(\sum_{(k_1, ..., k_{N})}
\prod_{j:\varepsilon_{j,1}+\varepsilon_{j,2}=0}\mathfrak a_{k_j-k_{j-1}}^{-d-\eta_{j,1}\varepsilon_{j,1}-\eta_{j,2}\varepsilon_{j,2}}\right)=\mathcal O\left(\mathcal A_n^{\mathcal E^{\boldsymbol\eta}_{0}+\frac{1-u_0}2\mathcal E^{\boldsymbol\eta}_1}\right)\, ,
\end{align*}
where 
\[
\mathcal E_k^{\boldsymbol\eta}:=\#\{j=1,...,N:\eta_{j,1}\varepsilon_{j,1}+\eta_{j,2}\varepsilon_{j,2}=k\eta\}\, .
\]
We also set $\mathcal E_k:=\#\{j=1,...,N:\varepsilon_{j,1}+\varepsilon_{j,2}= k\}$ and observe that
$\mathcal E_1\le\mathcal E_1^{\boldsymbol\eta}$. 
Recall $\Gamma_k=\sum_{j=0}^{N}N^{(k)}_j=\#\{j:\gamma_j=k\}$.
Then
\[
N=\mathcal E_0^{\boldsymbol\eta}+\mathcal E_1^{\boldsymbol\eta}+\mathcal E_2^{\boldsymbol\eta}
=\mathcal E_0+\mathcal E_1+\mathcal E_2
=\Gamma_0+\Gamma_1\, .
\]
On the other side
\begin{align*}
\eta\Gamma_1&=\eta \sum_{j=1}^{N}\gamma_j= \sum_{j=1}^{N}(\eta_{j,2}\varepsilon_{j,2}+\eta_{j+1,1}\varepsilon_{j+1,1})\\
&=\sum_{j=1}^{N}(\eta_{j,1}\varepsilon_{j,1}+\eta_{j+1,1}\varepsilon_{j,2})-\eta_{1,1}\varepsilon_{1,1}= \eta\left(\mathcal E_1^{\boldsymbol\eta}+2\mathcal E_2^{\boldsymbol\eta}\right)\, ,\\
\end{align*}
and so
\begin{align*}
\mathcal E_0^\eta+\frac{1-u_0}2\mathcal E_1^{\boldsymbol\eta}&=
N-\mathcal E_2^{\boldsymbol\eta}-\mathcal E_1^{\boldsymbol\eta}+\frac{1-u_0}2\mathcal E_1^{\boldsymbol\eta}\\
&= \left(\Gamma_0+\Gamma_1\right)-\left(\frac{\Gamma_1-\mathcal E_1^{\boldsymbol\eta}}2
\right)-\frac{1+u_0}2\mathcal E_1^{\boldsymbol\eta}\\
&= \Gamma_0+\frac{\Gamma_1}2-
\frac{u_0}2
\mathcal E_1^{\boldsymbol\eta}\, .
\end{align*}
Hence we have proved that
\[
H^{(n)}_{\boldsymbol\varepsilon}(\boldsymbol\gamma)
=o\left(\mathcal A_n^{\Gamma_0+\frac{\Gamma_1}2}\right)\quad\mbox{if }\mathcal E_1^{\boldsymbol\eta}>0\, ,
\]
Now we assume that $\mathcal E_1^{\boldsymbol\eta}=0$, this implies that
$\mathcal E_1=0$ and so that the $j$'s such that $\varepsilon_j=(1,1)$ are isolated and $\mathcal E_2= \mathcal E_2^{\boldsymbol\eta}=\frac{\Gamma_1}2$.\\
Observe that this implies that $\Gamma_1=\sum_{j=1}^MN^{(1)}_j$ is even and
we have proved that
\begin{equation}\label{Enodd}
E_n=o\left(\mathfrak A_n^{\Gamma_0+\frac{\Gamma_1}2}\right)\quad\mbox{if }n\mbox{ is odd}\, .
\end{equation}
The above conditions on $\boldsymbol\gamma$ and $\boldsymbol\varepsilon$
imply that $\boldsymbol\varepsilon$ is a sequence of $(0,0)$ and $(1,1)$ (which are isolated) and that $\varepsilon_{j,2}+\varepsilon_{j+1,1}\ge\gamma_j$, thus 
$\varepsilon_{j,2}+\varepsilon_{j+1,1}=\gamma_j$ and
\begin{itemize}
	\item if $\gamma_j=0$, then both $\varepsilon_j=(\varepsilon_{j,1},\varepsilon_{j,2})$ and
	$\varepsilon_{j+1}=(\varepsilon_{j+1,1},\varepsilon_{j+1,2})$ are $(0,0)$.
	\item if $\gamma_j=1$, then either $\varepsilon_j=(\varepsilon_{j,1},\varepsilon_{j,2})$ or
	$\varepsilon_j=(\varepsilon_{j,1},\varepsilon_{j,2})$ is $(1,1)$, and the other one is $(0,0)$.
\end{itemize}
This means that the $j$'s such that $\gamma_j=1$ appear in pairwise disjoint pairs $(j-1,j)$ such that $(\gamma_{j-1},\gamma_{j})=(1,1)$, and that $\varepsilon_j=(1,1)$ if and only if $(j-1,j)$ is such a couple. Fixing such a pair $(\boldsymbol\gamma, \boldsymbol\varepsilon)$, let us write $\mathcal J$ for the set of such  $j$ such that $\varepsilon_j=(1,1)$. Then, using repeatedly~\eqref{Qk0} and~\eqref{ESTI1}, we obtain
	\begin{align*}
	&\mathfrak A_n^{-\Gamma_0-
		\frac{\Gamma_1}2}
	    \tilde H^{(n)}_{\boldsymbol\varepsilon}(\boldsymbol\gamma) =\sum_{(k_1, ..., k_{N})\in \mathcal K_n}
	    \prod_{i=1}^Mc_{(k_{m_{i-1}+1}
	    	,...,k_{m_i})}^{(\gamma_{m_{i-1}+1}
	    	,...,\gamma_{m_i})}
	    \prod_{j':\gamma_{j'}=0}\left( 
	    \frac{\mathfrak a_{k_{j'}-k_{j'-1}}^{-d}}{\mathfrak A_n}\Phi(0)\sum_{a\in\mathbb Z^d}\beta_a^{(0)}\right)\\
	&\quad \prod_{j\in\mathcal J}\frac {\Phi(0) \mathfrak a_{k_{j-1}-k_{j-2}}^{-d}}{2\, \mathfrak A_n}\sum_{a,b\in\mathbb Z^d}\beta^{(1)}_a\beta^{(1)}_b\mathbb E_\nu\left[Q''_{k_j-k_{j-1},a,b}(\mathbf 1)\right]+o(1)
	\end{align*}
and so
	\begin{align*}
\mathfrak A_n^{-\Gamma_0-
	\frac{\Gamma_1}2}
\tilde H^{(n)}_{\boldsymbol\varepsilon}(\boldsymbol\gamma)
	&=o\left(1\right)+\Phi(0)^{\Gamma_0+
		\frac{\Gamma_1}2}\left(\sum_{a\in\mathbb Z^d}\beta_a^{(0)}\right)^{\Gamma_0}\sum_{(k_1, ..., k_{N})\in \mathcal K_n:k_j<k_{j+1}}
	\prod_{i=1}^M(N_i^{(0)})!(N_i^{(1)})!\\
	&\quad\left(\prod_{j:\gamma_j=0}\frac{\mathfrak a_{k_j-k_{j-1}}^{-d}}{\mathfrak A_n}\right)
	\prod_{j\in\mathcal J}\frac { \mathfrak a_{k_{j-1}-k_{j-2}}^{-d}}{2\, \mathfrak A_n}
	\sum_{a,b\in\mathbb Z^d}\beta^{(1)}_a\beta^{(1)}_b\mathbb E_\nu\left[
	Q''_{k_j-k_{j-1},a,b}(\mathbf 1)\right]\, .
	\end{align*}
	It can be worthwhile to notice that 
	we can restrict the above sum on the $(k_1, ..., k_{N})\in \mathcal K_n$
	such that $k_j-k_{j-1}<\log n$ if $j\in\mathcal J$, and $k_j-k_{j-1}>\log n$ for the other values of $j$'s. This implies that
	\[
	\mathfrak A_n^{-\Gamma_0-\frac{\Gamma_1}2}\tilde H^{(n)}_{\boldsymbol\varepsilon}(\boldsymbol\gamma) 
	=o(1)
	\]
	as soon as there exist $j\in\mathcal J$ and $i\in\{1,...,M\}$ such that
	$k_{j-1}<\lfloor nt_i\rfloor\le k_{j}$ (indeed this combined with
	$k_j-k_{j-1}<\log n$ implies that $0<k_j-nt_i<\log n$ and $0<nt_i-k_{j-1}<\log n$.  
	In particular
	\[
	E_n=o\left(\mathfrak A_n^{\Gamma_0+
		\frac{\Gamma_1}2} \right)
	\quad\mbox{if }\exists j\in\{1,...,M\},\ N_j^{(1)}\in 2\mathbb Z+1\, .
	\]
	We assume from now on that the $N_j^{(1)}$'s are even and that $\mathcal J$ is such that, for every $j\in\mathcal J$, there exists $i=1,...,M$ such that  
	$k_{j-1},k_j$ are in a same set $\{\lfloor nt_{i-1}\rfloor,...,\lfloor nt_i\rfloor-1\}$. Then
	\begin{align*}
	&\mathfrak A_n^{-\Gamma_0-\frac{\Gamma_1}2}\tilde H^{(n)}_{\boldsymbol\varepsilon}(\boldsymbol\gamma) =o\left(1\right)+
	\prod_{j=1}^M\left(N_j^{(0)}!\, N_j^{(1)}!
	(\Phi(0))^{N''_j}\left( 
	\sum_{a\in\mathbb Z^d}\beta^{(0)}_a\right)^{N_j^{(0)}}\right.\\
	&
	\left(\frac 12\sum_{k\ge 0}(1+\mathbf 1_{\{k\ne 0\}})\sum_{a,b\in\mathbb Z^d}\beta_a^{(1)}\beta_b^{(1)}\mathbb E_\nu\left[\bar Q''_{k_j-k_{j-1},a,b}(\mathbf 1)\right]\right)^{\frac{N_j^{(1)}}2}
	\left.\sum_{(k'_1, ..., k'_{N''})\in\mathcal K'_n}
	\prod_{j=1}^{N''}\frac { \mathfrak a_{k'_{j}-k'_{j-1}}^{-d}}{ \mathfrak A_n}
	\right)\, ,
	\end{align*}
	with $N''_j:=N_j^{(0)}+\frac{N^{(1)}_j}2$ and $N'':=\sum_{j=1}^MN''_j$, and where
	$\mathcal K'_n$ is the set of strictly increasing sequences $k'_1\le...\le k'_{N''}$ with exactly $N''_j $ elements between $\lfloor nt_{j-1}\rfloor$
	and $\lfloor nt_j\rfloor-1$ and with the convention $k'_0=0$.
	Thus
	\begin{align*}
	&\frac{\tilde H^{(n)}_{\boldsymbol\varepsilon}(\boldsymbol\gamma)}{\mathfrak A_n^{\sum_{j=1}^MN''_j}} 
	=o\left(1\right)+E'_n
	\prod_{j=1}^M\left( N_j^{(0)}! N_j^{(1)}!
	\left(\sum_{a\in\mathbb Z^d}\beta^{(0)}_a\right)^{N_j^{(0)}}2^{-\frac {N_j^{(1)}}2}\sigma_{\beta^{(1)}}^{N_j^{(1)}}
	\frac {1}{N''_j!}\right)
	\end{align*}
	with
	\begin{align}
\nonumber	E'_n&:=(\Phi(0))^{N''}\left(\prod_{j=1}^M N''_j!\right)\sum_{(k'_1, ..., k'_{N''})\in\mathcal K'_n}
	\prod_{j=1}^{N''}\frac { \mathfrak a_{k'_{j}-k'_{j-1}}^{-d}}{ \mathfrak A_n}\\
	&=o(1)+\mathfrak A_n^{-N''}
	\mathbb E_{\nu}\left[
	\prod_{j=1}^M\left(\sum_{k_j=\lfloor n t_{j-1}\rfloor}^{\lfloor n t_j\rfloor-1}\mathbf 1_{\{S_{k_j}=0\}}\right)^{N''_j}\right]\, .\label{momentlocaltime}
	\end{align}
	We observe that there exist $\frac{N''_j!}{N_j^{(0)}! (N_j^{(1)}/2)!}$
	sequences $(\gamma_{m_j+1},...,\gamma_{m_{j+1}})\in\{0,1\}^{N^{(0)}_j+N^{(1)}_j}$ in which the $1$'s appear in $N^{(1)}_j/2$ pairwise distinct  pairs $(\gamma_{j-1},\gamma_j)$. Therefore
	\begin{align*}
	\frac{E_n}{\mathfrak A_n^{\sum_{j=1}^MN''_j}} 
	&=o\left(1\right)+ E'_n
	\prod_{j=1}^M\left( \frac{N_j^{(1)}!}{(N_j^{(1)}/2)!2^{\frac{N_j^{(1)}}2}}
	\left(\sum_{a\in\mathbb Z^d}\beta^{(0)}_a\right)^{N_j^{(0)}}\sigma_{\beta^{(1)}}^{N_j^{(1)}}\right)\\
	&=o\left(1\right)+E'_n
	\prod_{j=1}^M\left(\left(\sum_{a\in\mathbb Z^d}\beta^{(0)}_a\right)^{N_j^{(0)}}\mathbb E\left[(\sigma_{\beta^{(1)}}\mathcal N)^{N_j^{(1)}} \right]
	\right)\, .
	\end{align*}
	It remains to study the asymptotics of $E'_n$.
	\begin{itemize}
		\item If $d=1<\alpha$, we consider a  $\mathbb Z$-valued non-arithmetic random walk $(\widetilde S_n)_n$ (with i.i.d. increments) such that $(\widetilde  S_{\lfloor nt\rfloor}/\mathfrak a_n)_n$ converges in distribution to the 
		$\alpha$-stable process $W$. The previous computations hold also true  (more easily) for $\widetilde S_n$ instead of $S_n$ and lead to
		\[
		E'_n\sim \mathfrak A_n^{-N''}
		\mathbb E\left[
		\prod_{j=1}^M\left(\sum_{k_j=\lfloor n t_{j-1}\rfloor}^{\lfloor n t_j\rfloor-1}\mathbf 1_{\{\widetilde S_{k_j}=0\}}\right)^{N''_j}\right]\quad\mbox{as }n\rightarrow +\infty\, .
		\]
		But the process  $\left(\sum_{k=0}^{\lfloor nt\rfloor-1}\mathbf 1_{\{S_k=0\}}\right)_t$ of local time at 0 of $\widetilde S_n$
		converges in distribution to the process $(\mathcal L_t)_t$ of local time at 0 of $W$. This combined with the dominations of the moments of any order ensures that 
		$(E'_n)_n$ converges in distribution to
$\mathbb E\left[\prod_{j=1}^M\left(\mathcal L_{t_j}-\mathcal L_{t_{j-1}}\right)^{N''_j}\right]$. 
		We conclude that
		\begin{align*}
		&\frac{E_n}{\mathfrak A_n^{\sum_{j=1}^MN''_j}} 
		=o\left(1\right)+
		\mathbb E\left[\prod_{j=1}^M\left(\mathcal L_{t_j}-\mathcal L_{t_{j-1}}\right)^{N''_j}\right]
		\prod_{j=1}^M\left(\left(\sum_{a\in\mathbb Z^d}\beta^{(0)}_a\right)^{N_j^{(0)}}\mathbb E\left[(\sigma_{\beta^{(1)}}\mathcal N)^{N_j^{(1)}} \right]
		\right)\\
		&=o\left(1\right)+
		\mathbb E\left[\prod_{j=1}^M\left(\sum_{a\in\mathbb Z^d}\beta^{(0)}_a\left(\mathcal L_{t_j}-\mathcal L_{t_{j-1}}\right)\right)^{N_j^{(0)}}
		\prod_{j=1}^M\left(\sigma_{\beta^{(1)}}(\mathcal N_{\mathcal L_{t_j}}-\mathcal N_{\mathcal L_{t_{j-1}}})\right)^{N_j^{(1)}} \right]
		\, .
		\end{align*}

		\item If $d=\alpha$,
		then
		\begin{align*}
		\mathfrak A_n^{-N''}\mathbb E_\nu\left[\left(\sum_{k=\lfloor NT_0\rfloor}^{\lfloor Nt_M\rfloor-1}\mathfrak 1_{\{S_k=0\}}\right)^{N''}\right]
	&=\mathcal O\left(	\mathfrak A_n^{-N''}\left(\sum_{k=\lfloor NT_0\rfloor}^{\lfloor nt_M\rfloor-1}\mathfrak a_k^{-d}\right)^{N''}\right)\\
	&=\mathcal O\left(\left((\mathfrak A_{\lfloor nt_M\rfloor}-\mathfrak A_{\lfloor nT_0\rfloor})/\mathfrak A_n^{N''}\right)^{N''}\right)\, ,
		\end{align*}
converges to 0 as $n\rightarrow +\infty$, since $(\mathfrak A_n)_{n\ge 0}$ is slowly varying. Thus 
		\[
		E'_n=o(1) \quad\mbox{if }M\ge 2\, .
		\]
		Furthermore, it follows from the proof of Theorem~\ref{main1} that
		if $M=1$,
		\begin{align*}
		E'_n&=o(1)+\mathfrak A_n^{-N''}
		\mathbb E_{\nu}\left[
		\left(\sum_{k_j=\lfloor n t_{j-1}\rfloor}^{\lfloor n t_j\rfloor-1}\mathbf 1_{\{S_{k_j}=0\}}\right)^{N''_1}\right]\\
		&= (\Phi(0))^{N''_1}N''_1!=(\Phi(0))^{N''_1}\mathbb E[\mathcal E^{N''_1}]\, ,
		\end{align*}
		where $\mathcal E$ is a random variable with standard exponential distribution due to theorem~\ref{MAIN}.
		We infer that
		\begin{align*}
		&\frac{E_n}{\mathfrak A_n^{\sum_{j=1}^MN''_j}} 
		=o\left(1\right)+\mathbb E\left[(\Phi(0)\mathcal E)^{N''_1}\prod_{j=2}^M\left(\Phi(0)\mathcal E-\Phi(0)\mathcal E\right)^{N''_j}\right]
		\prod_{j=1}^M\left(\left(\sum_{a\in\mathbb Z^d}\beta^{(0)}_a\right)^{N_j^{(0)}}\mathbb E\left[(\sigma_{\beta^{(1)}}\mathcal N)^{N_j^{(1)}} \right]
		\right)\\
		&=o\left(1\right)+
		\mathbb E\left[\prod_{j=1}^M\left(\sum_{a\in\mathbb Z^d}\beta^{(0)}_a\left(\Phi(0)\mathcal E(\mathbf 1_{\{t_j>0\}}-\mathbf 1_{\{t_{j-1}>0\}})\right)\right)^{N_j^{(0)}}
		\prod_{j=1}^M(\sigma_{\beta^{(1)}}(\mathcal N_{\Phi(0)\mathcal E\mathbf 1_{\{t_j>0\}}}-\mathcal N_{\Phi(0)\mathcal E\mathbf 1_{\{t_{j}>0\}}}))^{N_j^{(1)}} \right]
		\, .
		\end{align*}
	\end{itemize}
	This combined with~\eqref{Enodd} and the Carleman's criteria~\cite{Schmudgen} ends the proof of the convergence of the finite dimensional distributions.\\
	Let us write $((X_t^{(1,n)},X_t^{(2,n)})_{t})_{n\ge 1}$ for the joint process~\eqref{jointprocess} and let us prove its tightness. 
	When $d=1<\alpha$, we set $T_0=0$, otherwise we fix some $T_0\in(0;T)$.
	We use the tightness criterion of \cite[Theorem 13.5, (13.4)]{Bill}.
	We have proved the convergence of the finite dimensional distributions.
	It remains to prove that there exist $\alpha_1>1$ and $C>0$ such that, for every $r,s,t$ such that $T_0\le r\le s\le t\le T$, for all $j\in\{1,2\},$
	\begin{equation}\label{criteretightness}
	\exists p_j\in\mathbb N^*,\quad \mathbb E_{\nu}\left[|X_t^{(j,n)}-X_s^{(j,n)}|^{p_j}|X_s^{(j,n)}-X_r^{(j,n)}|^{p_j}\right]\le C
	|t-r|^{\alpha_1}\, .
	\end{equation}
	Observe first that, if $0\le r\le s\le t\le T$ and $t-r<1/n$, then $X_t^{(n)}-X_s^{(n)}=0$
	or $X_s^{(n)}-X_r^{(n)}=0$, thus the left hand side of~\eqref{criteretightness} is null  and so~\eqref{criteretightness} holds true. 
	Assume from now on that $T_0\le r\le s\le t\le T$ and that $t-r\ge 1/n$.
	We will use the following inequality
	\[
	\mathbb E_{\nu}\left[|X_t^{(j,n)}-X_s^{(j,n)}|^{p_j}|X_s^{(n)}-X_r^{(n)}|^{p_j}\right]\le 
	\left\Vert X_t^{(j,n)}-X_s^{(j,n)}\right\Vert_{L^{2p_j}(\nu)}^{p_j}\left\Vert X_s^{(j,n)}-X_r^{(j,n)}\right\Vert_{L^{2p_j}(\nu)}^{p_j}\, .
	\]
	Thus~\eqref{criteretightness} will follow from the fact that, for any $T_0\le r<t\le T$, $|t-r|>1/n$, and $j\in\{1,2\}$, 
	\begin{equation}\label{controletightness1}
 \exists p_j\in\mathbb N^*,\quad	\sup_{a,b:r\le a<b\le t}\left\Vert X_b^{(j,n)}-X_a^{(j,n)}\right\Vert_{L^{2p_j}(\nu)}^{2p_j}\le  C|t-r|^{\alpha_1}\, .
	\end{equation}
	It follows from our previous moment computation that
	\[
	\mathbb E_{\mu_{\mathcal C_0}}\left[|X_b^{(j,n)}-X_{a}^{(j,n)}|^{2p}\right]=\mathcal O\left(
	\left(\mathfrak A_n^{-1}\sum_{k=\lfloor nr\rfloor}^{\lfloor nt\rfloor -1}
	\mathfrak a_k^{-d}\right)^{\frac{2p}j}\right)\, .
	\]
	Thus it is enough to prove that 
	\begin{equation}\label{controletightness2}
	\exists\alpha_0>0,\quad \sup_{r,t:T_0\le r<t<T,\, |t-r|\ge 1/n}
	\mathfrak A_n^{-1}\sum_{k=\lfloor nr\rfloor}^{\lfloor nt\rfloor -1}
	\mathfrak a_k^{-d}=\mathcal O\left((t-r)^{\alpha_0}\right)\, .
	\end{equation}
	Indeed, we will conclude by taking $p_j=j\lceil (2\alpha_0)^{-1}\rceil$
	so that~\eqref{controletightness1} and so~\eqref{criteretightness} hold true with $\alpha_1:=\frac{2p_j\alpha_0}j>1$.\\
Since $(\mathfrak a_n)_{n\ge 0}$ is $(1/\alpha)$-regularly varying, it follows from Karamata's theorem~\cite{Karamata,BGT} that there exist three bounded convergent sequences  $(c(n))_{n\ge 0}$ (positive, with positive limit), $(b(n))_{n\ge 0}$ (converging to $0$) and $(\theta_n)_{n\ge 0}$ 
(positive, converging to $\frac{\alpha-1}\alpha$ if $d<\alpha$ and to $0$ if $d=\alpha$, see~\cite[Proposition 1.5.9.b]{BGT})
 such that
	\[
	\forall n\ge 0,\quad na_n^{-d}=\mathfrak A_n\theta_n\quad\mbox{and}\quad \mathfrak A_n=c(n)n^{\frac{\alpha-d}\alpha}e^{\int_{1}^n\frac{b(t)}t\, dt}\, .
	\] 
Now let us choose $\alpha_0$. If $d=1<\alpha$, we set $\alpha_0:=\frac{\alpha-1}{2\alpha}$. If $d=\alpha$, we take $\alpha_0:=1$.  Up to change, if necessary, the first terms of $(b(n))_{n\ge 0}$ and $(c(n))_{n\ge 0}$, we assume without loss of generality that the sequence $(b(n))_{n\ge 0}$ is bounded by $\alpha_0$.\\ 
If $d=1<\alpha$, if $r\le 2/n$ (observe that,  when $d=\alpha$, this case does not happen for large values of $n$ since $r\ge T_0>0$) and $|t-r|\ge 1/n$, then
	\begin{equation}\label{rpetit}
	\mathfrak A_n^{-1}\sum_{k=0}^{\lfloor nt\rfloor -1}
	\mathfrak a_k^{-d}=\mathcal O\left(
	\frac{\mathfrak A_{\lfloor nt\rfloor}} {\mathfrak A_n} \right)=\mathcal O\left(t^{1-\frac 1\alpha-\alpha_0}\right)
	=\mathcal O\left(  (t-r)^{\alpha_0}\right)\, ,
	\end{equation}
	implying~\eqref{controletightness2} and so~\eqref{controletightness1}
	and~\eqref{criteretightness} in this case.\\
	
	We assume from now on that $T_0\le r<t\le T$ and $2/n<r<t\le T$ (so that $\lfloor nr\rfloor-1>0$) and $t-r\ge 1/n$ (so that $\lfloor nt\rfloor-\lfloor nr\rfloor\le 2(nt-nr)$).
	Then, using the uniform dominations on $(c(n),\theta_n,b(n))_n$
	combined with a series-integral comparison, we obtain that
	\begin{align}
	\nonumber\mathfrak A_{n}^{-1}\sum_{k=\lfloor nr\rfloor}^{\lfloor nt\rfloor -1}
	\mathfrak a_k^{-d}
	&=\sum_{k=\lfloor nr\rfloor}^{\lfloor nt\rfloor -1}\frac{\alpha-1}{\alpha k}\frac{c(k)}{c(n)}\frac{k^{\frac {\alpha-d}\alpha}}{ n^{\frac{\alpha-d}\alpha}}\theta_k
	e^{\int_{n}^k\frac{b(u)}u\, du}=\mathcal O\left(\sum_{k=\lfloor nr\rfloor}^{\lfloor nt\rfloor -1}\frac 1{ n}(k/n)^{-\frac {d}\alpha-\alpha_0}\right)\, .
   \end{align}
   \begin{itemize}
\item   If $d=1<\alpha$, it follows that
   \begin{align}
   	\nonumber\mathfrak A_{n}^{-1}\sum_{k=\lfloor nr\rfloor}^{\lfloor nt\rfloor -1}
   \mathfrak a_k^{-d}
    & =\mathcal O\left(  \left(\frac{\lfloor nt\rfloor-1}n\right)^{\alpha_0}-\left(\frac{\lfloor nr\rfloor-1}n\right)^{\alpha_0}\right)\nonumber\\
   &=\mathcal O\left(  \left(\frac{\lfloor nt\rfloor-\lfloor nr\rfloor}n\right)^{\alpha_0}\right)=\mathcal O\left(  \left(t-r\right)^{\alpha_0}\right)
    \, ,\label{rgrand}	    
    \end{align}
	ending the proof of~\eqref{controletightness2}, from which we infer~\eqref{controletightness1}
	and~\eqref{criteretightness}. This ends the proof of the tightness when $d<\alpha$.
	\item When $d=\alpha$, we obtain
	\begin{align}
	\nonumber\mathfrak A_n^{-1}\sum_{k=\lfloor nr\rfloor}^{\lfloor nt\rfloor -1}
	\mathfrak a_k^{-d}
	&=O\left(\left(\frac{\lfloor nr\rfloor}n\right)^{-\alpha_0}-\left(\frac{\lfloor{nt}\rfloor}n\right)^{-\alpha_0}\right)\\
		&=O\left(\frac{\lfloor nt\rfloor-\lfloor nr\rfloor}n\right)=\mathcal O\left(t-r\right)\, ,
	\end{align}
	from which we infer~\eqref{controletightness2},~\eqref{controletightness1},~\eqref{criteretightness}, and so the tightness in the case where $d=\alpha$.
\end{itemize}
	This ends the proof of Theorem~\ref{MAIN2}.
\end{proof}

\section{Proof of Theorems~\ref{main1} and~\ref{main2} via Fourier perturbations}\label{verif}
A strategy to prove Assumptions~\ref{HHH} and~\ref{HHH1} consists in noticing that
\begin{equation}\label{Qkaintegral}
Q_{k,a}=\frac 1{(2\pi)^d}\int_{[-\pi;\pi]^d}e^{-i\langle t,a\rangle}P_t^k(\cdot)\, dt\, ,\quad\mbox{with }P_t(h):=P(e^{i\langle t,\Psi\rangle})\, ,
\end{equation}
and to establish nice properties for $P_t$ as the one listed in the next result.
Recall that $P_t^n(h)=P^n(he^{itS_n})$.
\begin{prop}\label{criteria}
	Assume  $\mathcal B$ is a Banach space satisfying~\eqref{HypBanach} and that there exist two constants $b\in(0,\pi)$ and 	$\alpha_0>0$  such that~: 
	\begin{equation}\label{opcond1}
	\forall t\in[-b,b]^d,\ P_t^k=\lambda_t^k\Pi_t+\mathcal O(e^{-\alpha_0k})\quad \mbox{ and }\quad \sup_{b<|u|_\infty<\pi}\Vert P_u^k\Vert_{\mathcal B}=\mathcal O(e^{-\alpha_0k})\, ,
	\end{equation}
	with $(\lambda_{t/\mathfrak a_k}^k)_{k\ge 0}$ converging to the characteristic function $\varphi$ of an $\alpha$-stable distribution,
with $\Pi_t=\mathbb E_\nu[\cdot]+o(1)$ in $\mathcal L(\mathcal B\rightarrow L^1(\nu))$ as $t\rightarrow 0$, and with
	\begin{equation}\label{opcond2}
	\sup_{t\in[-b;b]^d}\Vert \Pi_t\Vert_{\mathcal B}<\infty\quad\mbox{and}\quad
	\int_{\mathbb R}(1+|t|^2)\left(\sup_{k\ge 1}|\lambda_{t/\mathfrak a_k}^k|\mathbf 1_{\{|t|<b\mathfrak a_k\}}\right)\, dt<\infty\, .
	\end{equation}
	Then Hypotheses~\ref{HHH1} (and so~\ref{HHH}) hold true with $\Phi(0)$
	the value at $0$ of the density function $\Phi$ of the 
	$\alpha$-stable distribution with characteristic function $\varphi$.
\end{prop}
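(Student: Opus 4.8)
The plan is to derive every clause of Hypotheses~\ref{HHH} and~\ref{HHH1} from the spectral data of $(P_t)_t$ through the Fourier inversion identity~\eqref{Qkaintegral}. The clause $\mathbf 1_\Delta\in\mathcal B\hookrightarrow L^1(\nu)$ is exactly~\eqref{HypBanach}, and the $(1/\alpha)$-regular variation of $(\mathfrak a_n)_n$ together with $\mathfrak A_n\to+\infty$ is part of the standing data (it is in any case forced by the fact that $(\lambda_{t/\mathfrak a_k}^k)_k$ converges to an $\alpha$-stable characteristic function, which also yields that $(S_n/\mathfrak a_n)_n$ converges in distribution to the corresponding $\alpha$-stable law via $\mathbb E_\nu[e^{i\langle t/\mathfrak a_k,S_k\rangle}]=\mathbb E_\nu[P_{t/\mathfrak a_k}^k(\mathbf 1)]$). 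The core of the proof consists, for each $k$ and each $a$, in splitting $Q_{k,a}=\tfrac1{(2\pi)^d}\int_{[-\pi;\pi]^d}e^{-i\langle t,a\rangle}P_t^k(\cdot)\, dt$ into the central part over $[-b;b]^d$ and the remaining non-central part. On the non-central part the second estimate of~\eqref{opcond1} gives a contribution of $\mathcal B$-operator norm $\mathcal O(e^{-\alpha_0 k})$; since $(\mathfrak a_k)_k$ grows at most polynomially (up to a slowly varying factor) this is $o(\mathfrak a_k^{-d-2\eta})$ and will be negligible in every estimate below, both in $\mathcal L(\mathcal B)$ and in $\mathcal L(\mathcal B\to L^1(\nu))$. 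On the central part I would substitute the expansion $P_t^k=\lambda_t^k\Pi_t+\mathcal O(e^{-\alpha_0 k})$ of~\eqref{opcond1}, discard the exponentially small error for the same reason, and perform the change of variable $t=s/\mathfrak a_k$, so that the matter reduces to the single family of integrals $\mathfrak a_k^{-d}\int_{\{|s|<b\mathfrak a_k\}}e^{-i\langle s/\mathfrak a_k,a\rangle}\lambda_{s/\mathfrak a_k}^k\Pi_{s/\mathfrak a_k}(\cdot)\, ds$.

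For the $\mathcal B$-norm bounds~\eqref{normQk0} and~\eqref{normQka} I would bound $|e^{-i\langle s/\mathfrak a_k,a\rangle}|=1$, use $\sup_{t\in[-b;b]^d}\Vert\Pi_t\Vert_{\mathcal B}<\infty$ from~\eqref{opcond2}, and dominate $|\lambda_{s/\mathfrak a_k}^k|\mathbf 1_{\{|s|<b\mathfrak a_k\}}$ by $\sup_{k\ge1}|\lambda_{s/\mathfrak a_k}^k|\mathbf 1_{\{|s|<b\mathfrak a_k\}}$, which is integrable by~\eqref{opcond2}; this gives $\sup_{a\in\mathbb Z^d}\Vert Q_{k,a}\Vert_{\mathcal B}=\mathcal O(\mathfrak a_k^{-d})$ in one stroke, hence~\eqref{normQk0} and~\eqref{normQka}. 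For the limit~\eqref{Qk0} I would instead keep track of the genuine limits: for fixed $a$ one has $e^{-i\langle s/\mathfrak a_k,a\rangle}\to1$, $\lambda_{s/\mathfrak a_k}^k\to\varphi(s)$ pointwise, and $\Pi_{s/\mathfrak a_k}\to\mathbb E_\nu[\cdot]$ in $\mathcal L(\mathcal B\to L^1(\nu))$; applying dominated convergence in the Banach space $\mathcal L(\mathcal B\to L^1(\nu))$, with the same integrable majorant and enlarging $\{|s|<b\mathfrak a_k\}$ to $\mathbb R^d$, yields $Q_{k,0}=\mathfrak a_k^{-d}\bigl(\tfrac1{(2\pi)^d}\int_{\mathbb R^d}\varphi(s)\, ds\bigr)\mathbb E_\nu[\cdot]+o(\mathfrak a_k^{-d})$ in $\mathcal L(\mathcal B\to L^1(\nu))$. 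Since $\varphi\in L^1(\mathbb R^d)$ (again by the majorant), Fourier inversion identifies $\tfrac1{(2\pi)^d}\int_{\mathbb R^d}\varphi(s)\, ds$ with the value $\Phi(0)$ of the density $\Phi$ of the $\alpha$-stable law at $0$, giving~\eqref{Qk0}.

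For the incremental bounds~\eqref{Q'majo1} and~\eqref{Q'majo} the key algebraic point is that the Fourier multipliers factor: $e^{-i\langle t,b\rangle}-e^{-i\langle t,a\rangle}=(e^{-i\langle t,b-a\rangle}-1)e^{-i\langle t,a\rangle}$ for $Q'_{k,a,b}$, and $e^{-i\langle t,b-a\rangle}-e^{-i\langle t,b\rangle}-e^{-i\langle t,-a\rangle}+1=(e^{-i\langle t,b\rangle}-1)(e^{i\langle t,a\rangle}-1)$ for $Q''_{k,a,b}$. Using the elementary inequality $|e^{ix}-1|\le 2^{1-\eta}|x|^\eta$ valid for $\eta\in[0;1]$, together with Cauchy--Schwarz, these multipliers are bounded on the central part by $C|t|^\eta|b-a|^\eta$, respectively $C|t|^{2\eta}(|a|\,|b|)^\eta$. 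After the substitution $t=s/\mathfrak a_k$ this produces the prefactors $|b-a|^\eta\mathfrak a_k^{-\eta}$, respectively $(|a|\,|b|)^\eta\mathfrak a_k^{-2\eta}$, multiplied by $\int_{\mathbb R^d}|s|^\eta(\cdots)\, ds$, respectively $\int_{\mathbb R^d}|s|^{2\eta}(\cdots)\, ds$, which are finite because $\eta\le1$, so $2\eta\le2$, and the majorant in~\eqref{opcond2} carries the weight $1+|s|^2$; together with the Jacobian factor $\mathfrak a_k^{-d}$ this is precisely~\eqref{Q'majo1} and~\eqref{Q'majo}. On the non-central part one uses the crude bound $2\le2|b-a|^\eta$ (valid whenever $a\ne b$; when $a=b$ both $Q'_{k,a,b}$ and $Q''_{k,a,b}$ vanish), which keeps the exponentially small term negligible against $\mathfrak a_k^{-d-\eta}$, respectively $\mathfrak a_k^{-d-2\eta}$. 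This establishes Hypothesis~\ref{HHH1}, hence~\ref{HHH}.

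The step I expect to be the most delicate is the dominated-convergence argument leading to~\eqref{Qk0}: one has to combine the scalar convergences $\lambda_{s/\mathfrak a_k}^k\to\varphi(s)$ and $e^{-i\langle s/\mathfrak a_k,a\rangle}\to1$ with the operator-valued convergence $\Pi_{s/\mathfrak a_k}\to\mathbb E_\nu[\cdot]$ that only holds in the weaker norm of $\mathcal L(\mathcal B\to L^1(\nu))$, maintain a single $k$-uniform majorant on the truncated domains $\{|s|<b\mathfrak a_k\}$, and control the error $\mathcal O(e^{-\alpha_0 k})$ after integration against a modulus-one phase; once the central/non-central split and the rescaling are set up, the remaining verifications are bookkeeping.
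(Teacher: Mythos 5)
Your proposal is correct and follows essentially the same route as the paper's proof: Fourier inversion of $Q_{k,a}$, the central/non-central split with the exponentially small remainder, dominated convergence in $\mathcal L(\mathcal B\rightarrow L^1(\nu))$ after rescaling $t=s/\mathfrak a_k$ to get~\eqref{Qk0} with $\Phi(0)=\frac 1{(2\pi)^d}\int\varphi$, and the factorized multiplier bounds $|e^{ix}-e^{iy}|\le 2^{1-\eta}|x-y|^\eta$ for~\eqref{Q'majo1} and~\eqref{Q'majo}. The only (harmless) cosmetic difference is that you treat the non-central contribution to the incremental bounds separately, whereas the paper absorbs it into a single estimate of $\int_{[-\pi;\pi]^d}|t|^\eta\Vert P_t^k\Vert_{\mathcal B}\, dt$.
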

\begin{proof}[Proof of Proposition~\ref{criteria}]
It follows from our assumptions that
	\begin{align}\label{controlI}
Q_{k,0}=\frac 1{(2\pi)^d}\int_{[-\pi;\pi]^d}P_t^k\, dt
&=\frac {{\mathfrak a}_k^{-d}}{(2\pi)^d}\int_{[-b {\mathfrak a}_k;b {\mathfrak a}_k]^d}\!\!\!\!\! \lambda_{t/{\mathfrak a}_k}^k\Pi_{t/{\mathfrak a}_k}\, dt+\mathcal O(e^{-\alpha_0 k})=\Phi(0)\mathfrak a_k^{-d}+o\left({\mathfrak a}_k^{-d}\right)\, ,
\end{align}
in $\mathcal L(\mathcal B\rightarrow L^1(\nu))$, 
 via the dominated convergence theorem since $\lim_{n\rightarrow +\infty }\lambda_{t/\mathfrak a_n}^n\Pi_{t/\mathfrak a_n}=\varphi(t)\mathbb E_\nu[\cdot]$ in $\mathcal L(\mathcal B\rightarrow L^1(\nu))$  and since
$\Phi(0)=\frac 1{(2\pi)^d}\int_{\mathbb R}\varphi(t)\, dt$. Thus~\eqref{Qk0} holds true.

Furthermore, for all $\eta\in[0;2]$ and $a\in\mathbb Z^d$, in $\mathcal L(\mathcal B)$,
	\begin{align}\label{controlInt}
	\int_{[-\pi;\pi]^d}|t|^\eta \left\Vert P_t^k\right\Vert_{\mathcal B}\, dt
	&={\mathfrak a}_k^{-d-\eta}\int_{[-b {\mathfrak a}_k;b {\mathfrak a}_k]^d}\!\!\!\!\! |t|^\eta\left|\lambda_{t/{\mathfrak a}_k}^k\right|\left\Vert\Pi_{t/{\mathfrak a}_k}\right\Vert_{\mathcal B}\, dt+\mathcal O\left(e^{-\alpha_0 k}\right)=\mathcal O\left({\mathfrak a}_k^{-d-\eta}\right)\, .
	\end{align}
	Using the expression~\eqref{Qkaintegral} of $Q_{k,a}$ combined with~\eqref{controlInt} with $m=0$, we obtain~\eqref{normQk0} and~\eqref{normQka}.
	Fix $\eta\in[0;1]$.
	Then
	\begin{align*}
	\nonumber Q'_{k,a,b}&=Q_{k,b}-Q_{k,a}=\frac 1{(2\pi)^d}
	\int_{[-\pi;\pi]^d}(e^{i\langle t,b\rangle}-e^{i\langle t,a\rangle})
	P_t^k(\cdot)\, dt\\
	\nonumber&=\frac 1{(2\pi)^d}
	\int_{[-\pi;\pi]^d}\mathcal O(\langle t,b-a\rangle)^\eta
	P_t^k(\cdot)\, dt\\
	&=\mathcal O\left(|b-a|^\eta{\mathfrak a}_k^{-d-\eta}\right)\quad\mbox{in }\mathcal L(\mathcal B)\, ,
	\end{align*}
	where we used again~\eqref{controlInt} combined with the bound $|e^{ix}-e^{iy}|\le\min(2,|x-y|)\le 2^{1-\eta}|x-y|^{\eta}$,
	and so we have proved~\eqref{Q'majo1}. 
	For~\eqref{Q'majo}, in the same way, we obtain
	\begin{align}
	\nonumber Q''_{k,a,b}&=Q_{k,b-a}-Q_{k,b}-Q_{k,-a}+Q_{k,0}\\
	\nonumber&=\frac 1{(2\pi)^d}
	\int_{[-\pi;\pi]^d}(e^{i\langle t,b-a\rangle}-e^{i\langle t,b\rangle}-e^{-i\langle t,a\rangle}+1)
	P_t^k(\cdot)\, dt\\
	\nonumber&=\frac 1{(2\pi)^d}
	\int_{[-\pi;\pi]^d}(e^{i\langle t,b\rangle}-1)(e^{-i\langle t,a\rangle}-1)
	P_t^k(\cdot)\, dt\\
	\nonumber&=\frac 1{(2\pi)^d}
	\int_{[-\pi;\pi]^d}\mathcal O(\langle t,b\rangle\langle t,a\rangle)^\eta
	P_t^k(\cdot)\, dt\\
	&=\mathcal O\left((|a||b|)^\eta{\mathfrak a}_k^{-d-2\eta}\right)\quad\mbox{in }\mathcal L(\mathcal B)\, ,
	\end{align}
	and so~\eqref{Q'majo}. 
\end{proof}

\begin{proof}[Proof of Theorems~\ref{main1} and~\ref{main2}]
	Let us write $\beta_\ell$ for the constant to which $f$ is equal on the $\ell$-cell $\mathcal C_\ell$. 
	The integrability assumption means that $\sum_{\ell\in\mathbb Z}|\beta_\ell|<\infty$.
	Due to \cite{Zweimuller07}, since $\mu$ is equivalent to the Lebesgue measure on $M$, it is enough to prove the results
	with respect to the measure $\mu_{\mathcal C_0}$ (the restriction of $\mu$ to $\mathcal C_0$). 
	Thus, we consider this reference measure and establish the convergence of every moment with respect to this probability measure. 
	We observe that, with the identification of $\bar M$ to $\mathcal C_0$,
	$\mu_{\mathcal C_0}$ is identified with $\bar\mu$ and  $f\circ T^k$
	is identified with $\beta_{\bar S_k}$.\\
	As in~\cite{SV07,PTerhesiu1}, we use the two Young towers~\cite{Young98,Chernov99}.
	We write $(\hat\Delta,\hat F,\hat \nu)$ for the hyperbolic tower which is an extension of $(\bar M,\bar T,\bar \mu)$, and write
	$(\Delta,F,\nu)$ for the expanding tower obtained by quotienting $(\hat\Delta,\hat F,\hat \nu)$ along stable curves.
	We write $\bar\pi:\hat\Delta\rightarrow \bar M$ and $\pi:\hat\Delta\rightarrow\Delta$ for the two measurable maps such that
	$\bar\pi_*\hat\nu=\mu$, $\pi_*\hat\nu=\nu$, $\bar T\circ\bar\pi=\bar\pi\circ\hat F$ and $F\circ\pi=\pi\circ\hat F$. 
	Since $\bar\Psi$ is constant on stable curves, there exists a function
	$\Psi:\Delta\rightarrow \mathbb Z^d$ such that $\Psi\circ\pi=\bar\Psi\circ\bar\pi$. Setting $S_n:=\sum_{k=0}^{n-1}\Psi\circ F^k$, it follows that $
	S_n\circ\pi=\bar S_n\circ\bar\pi$.
	For the first part of Theorem~\ref{main1}, as noticed in Remark~\ref{LLN},
	it is enough to prove the first convergence result for $\beta_\ell=\mathbf 1_{\{\ell=0\}}$. We will conclude by Theorem~\ref{MAIN}. 
	To prove the assumptions of Theorem~\ref{MAIN}, we show that the criterion
	given in Proposition~\ref{criteria} is satisfied here with our choice of $\mathfrak a_n$, with $\alpha=2$ and with $\Phi$ the characteristic function of the Gaussian limit distribution of $(S_n/\mathfrak a_n)_n$. 
	The assumptions of Proposition~\ref{criteria}
	have been proved in~\cite{SV07} with the use of the Banach spaces introduced in~\cite{Young98} combined with the use of the Nagaev-Guivarch perturbation method~\cite{Nag1,Guivarch,GH} via the Keller and Liverani theorem~\cite{KL} (see also~\cite{P_livre} for a general reference on this method). 
	The fact that $(\lambda_{t/\mathfrak a_k}^k)_{k\ge 1}$ converges pointwise to the characteristic function of a Gaussian random variable follows from the existence of a positive symmetric matrix $A$ such that
	$1-\lambda_t\sim \langle At,t\rangle |\log|t||$ as $t\rightarrow 0$ (this was proved in~\cite{SV07}). For the second part of~\eqref{opcond2}, one can e.g. use the fact that $|\lambda_{t/\mathfrak a_k}^k|\mathbf 1_{\{|t|<b\mathfrak a_k\}}\le e^{-c_0\min(|t|^{2-\varepsilon},|t|^{2+\varepsilon}))}$.
	Thus Proposition~\ref{criteria} holds true and Theorems~\ref{MAIN}
	and~\ref{MAIN2} apply.
	Finally, we identify the formulas of the asymptotic variances $\sigma^2_f$
	and $\sigma^2_\beta$ by noticing that
	\[
	\sum_{a,b\in\mathbb Z^2}\beta_a\beta_b\nu(S_{|k|}=b-a)
	=\int_{M}f.f\circ T^{|k|}\, d\mu= \int_{M}f.f\circ T^{k}\, d\mu\, .
	\]
	For Theorem~\ref{main2}, we deduce the result for general $g$ using ergodicity of $(M,T,\mu)$ together with the Hopf ergodic ratio theorem.
\end{proof}

\begin{appendix}
\section{Proof of~\eqref{dominH} in the general case}\label{Append}

We assume here that $\sum_{a\in\mathbb Z^d}(1+|a|^{\eta})|\beta_a|<\infty$ with $\eta:=\frac{\alpha-d+\varepsilon}2$ for some $\varepsilon\in(0;1/2)$.

In Formula~\eqref{momentN'}, we decompose  $Q_{k,a}$ using the operators $Q''_{k,a,b}$ and $Q'_{k,c}:=Q'_{k,0,c}=Q_{k,c}-Q_{k,0}$ as follows
\[
Q_{k,a}=Q''_{k,a,b}+Q_{k,b}+Q_{k,-a}-Q_{k,0}=Q''_{k,a,b}+Q'_{k,b}+Q'_{k,-a}+Q_{k,0}\, .
\]
In~\eqref{momentN'}, we replace each  $Q_{k_j-k_{j-1},a_j-a_{j-1}}$ by this decomposition, we develop and obtain
\begin{align}\label{momentN''}
\mathbb E_{\nu}\left[\left(\sum_{k=0}^{n-1}\beta_{S_k}\right)^N\right]=
\sum_{\varepsilon_1,...,\varepsilon_N
} D^{(n,N)}_{\varepsilon_1,...,\varepsilon_N}\, ,
\end{align}
summing a priori over  $(\varepsilon_1,...,\varepsilon_N)\in(\{0,1\}^2)^N$ such that $\varepsilon_1\in\{(0,0),(0,1)\}$ the following quantity
\begin{align*}
D^{(n,N)}_{\varepsilon_1,...,\varepsilon_N}
&=\sum_{0\le k_1\le ...\le k_N\le n-1}c_{(k_1,...,k_N)}\sum_{a_1,...,a_N\in\mathbb Z^d}\left(\prod_{j=1} ^N\beta_{a_j}\right)\\
&\mathbb E_\nu[\widetilde Q_{k_N-k_{N-1},a_{N-1},a_N}^{(\varepsilon_N)}...\widetilde Q_{k_2-k_{1},a_1,a_2}^{(\varepsilon_2)} \widetilde Q_{k_1,a_0,a_1}^{(\varepsilon_1)}(\mathbf 1)]\, ,
\end{align*}
with $a_0=0$ and 
where $\widetilde Q_{k,a,b}^{(0,0)}=Q_{k_j-k_{j-1},0}$, $\widetilde Q_{k,a,b}^{(1,0)}=Q'_{k,-a}$, $\widetilde Q_{k,a,b}^{(0,1)}=Q'_{k,b}$,
$\widetilde Q_{k,a,b}^{(1,1)}=Q''_{k,a,b}$.\\
 We first restrict the sum over $\varepsilon_1,...,\varepsilon_N$. 
	 We observe that, since $\sum_{a_j\in\mathbb Z^d}\beta_{a_j}=0$, $D_{\varepsilon_1,...,\varepsilon_N}^{n,N}=0$
	if there exists $j=1,...,N$ 
	such that $\varepsilon_{j,2}+\varepsilon_{j+1,1}=0$ (with convention $\varepsilon_{N+1,1}=0$). 
	Thus we restrict the sum in~\eqref{momentN''}  to the sum 
	over the $\varepsilon_1,...,\varepsilon_N$ such that
	for all $j=1,...,N$, 
	such that $\varepsilon_{j,2}+\varepsilon_{j+1,1}\ge 1$.
We call {\bf admissible} any such sequence $\boldsymbol\varepsilon:=(\varepsilon_1,...,\varepsilon_N)$.
Let $\boldsymbol\varepsilon:=(\varepsilon_1,...,\varepsilon_N)$ be an admissible sequence.
\begin{itemize}
	\item We observe that $\#\{j\, :\, \varepsilon_j=(0,0)\}\le N/2$.
	\item The contribution to~\eqref{momentN''} of an admissible sequence $\boldsymbol\varepsilon=(\varepsilon_1,...,\varepsilon_N)$ is
	\[
	D^{(n,N)}_{\varepsilon_1,...,\varepsilon_N}=\mathcal O\left(\sum_{a_1,...,a_N\in\mathbb Z^d}\prod_{j=1}^N\left(|\beta_{a_j}|\, \sum_{k_j=0}^n\Vert \widetilde Q_{k_j,a_{j-1},a_j}^{(\varepsilon_j)}\Vert_{\mathcal B}\right)\right)\, .
	\]
	\item
We observe that there exists $u_0>0$ such that
\begin{equation}\label{comparan}
\sum_{k=0}^n\mathfrak a_k^{-d}=\mathfrak A_n\, ,
\quad
\sum_{k=0}^n
\mathfrak a_k^{-d-2\eta}
=\mathcal O(1)\, ,\quad 
\sum_{k=0}^n
\mathfrak a_k^{-d-\eta}
=\mathcal O\left(\mathfrak A_n^{\frac {1-u_0}2}\right)\, .
\end{equation}
Indeed $d+2\eta=\alpha+\varepsilon>\alpha$. For the last estimate, we use the fact that $(\mathfrak a_k)_k$ is $\frac 1\alpha$-regularly varying, and
infer that
$(\sum_{k=0}^{n}\mathfrak a_k^{-d-\eta})^2$ is either bounded or $2-\frac{2d+2\eta}\alpha$-regularly varying whereas $(\mathfrak A_n)_n$
is $(1-\frac d\alpha)$-regularly varying and diverges to  infinity (and $2-\frac{2d+2\eta}\alpha=1-\frac{d+\varepsilon}\alpha<1-\frac d\alpha$).
\item If,  
for all $j=1,...,N$, $\varepsilon_{j,2}+\varepsilon_{j+1,1}=1$,
then, it follows from Hypothesis~\ref{HHH1} that 
\[
D_{\varepsilon_1,...,\varepsilon_N}^{(n,N)}=
\mathcal O\left(d_{\varepsilon_1,...,\varepsilon_N}^{(n,N)}\right)\, ,\]
with
\begin{equation}\label{defd}
d_{\varepsilon_1,...,\varepsilon_N}^{(n,N)}:=\sum_{a_1,...,a_N\in\mathbb Z^d}\prod_{j=1}^N\left(|\beta_{a_j}|\, |a_j|^{\eta(\varepsilon_{j,2}+\varepsilon_{j+1,1})}\, \sum_{k_j=0}^n\mathfrak a_{k_j}^{-d-\eta\varepsilon_{j,1}-\eta\varepsilon_{j,2}}\right)\, ,
\end{equation}
and so, using~\eqref{comparan}, that
\[
D^{(n,N)}_{\varepsilon_1,...,\varepsilon_N}
=\mathcal O\left(d_{\varepsilon_1,...,\varepsilon_N}^{(n,N)}\right)=\mathcal O\left(\mathfrak A_n^{N_0+
N_1\frac {1-u_0}2
}\right)\, ,
\]
where $N_k:=\#\{j:\varepsilon_{j,1}+\varepsilon_{j,2}=k\}$,
since $\sum_{a\in\mathbb Z^d}(1+|a|)^\eta|\beta_a|<\infty$.
Observe that $N_0+N_1+N_2=N$ and $N=\sum_{k=1}^2\sum_{j=1}^N\varepsilon_{j,k}=N_1+2N_2$ and so $N_2=N_0$
and $N_0=\frac{N-N_1}2$. 
Therefore, in this case, 
\[
D^{(n,N)}_{\varepsilon_1,...,\varepsilon_N}=\mathcal O\left(d_{\varepsilon_1,...,\varepsilon_N}^{(n,N)}\right)=\mathcal O\left(\mathfrak A_n^{\frac{N-u_0N_1}2}\right)=o\left(\mathfrak A_n^{ N/2}\right)
\]
unless if $N_1=0$, i.e. unless if $N$ is even and if $\varepsilon_1,...,\varepsilon_N$ is the alternate sequence $(0,0),(1,1),...,(0,0),(1,1)$.
\item Assume now that there exists some $j_0\in\{1,...,N\}$ such that  $\varepsilon_{j_0,2}+\varepsilon_{j_0+1,1}=2$. 
Recall that it follows from Hypothesis~\ref{HHH1} that, for all $\eta'_{j,1},\eta'_{j,2}\in\{0,\eta\}$,
\begin{equation}
\left\Vert \widetilde Q^{(\varepsilon_j)}_{k,a_j,a_{j-1}}\right\Vert_{\mathcal B}=\mathcal O\left(|a_{j-1}/\mathfrak a_k|^{\eta'_{j,1}\varepsilon_{j,1}}|a_j/\mathfrak a_k|^{\eta'_{j,2}\varepsilon_{j,2}}\mathfrak a_k^{-d
}\right)\, .
\end{equation}
Indeed this follows from~\eqref{normQka},~\eqref{Q'majo1} and~\eqref{Q'majo} combined with the two following facts
\[
\forall\eta\in[0;1],\quad\left\Vert Q''_{k,a,b}\right\Vert_{\mathcal B}=\left\Vert Q'_{k,-a,b-a}-Q'_{k,0,b}\right\Vert_{\mathcal B}=\mathcal O(|b|^\eta)\mathfrak a_k^{-d-\eta}\]
and
\[
\forall\eta\in[0;1],\quad 
\left\Vert Q''_{k,a,b}\right\Vert_{\mathcal B}=\left\Vert Q'_{k,b,b-a}-Q'_{k,0,-a}\right\Vert_{\mathcal B}=\mathcal O(|a|^\eta)\mathfrak a_k^{-d-\eta}\, .\]
	We choose a family $(\eta'_{j,i})_{j=1,...,N;i=1,2}$ of $\{0,1\}$ such that, for all $j=1,...,N$,  $\eta'_{j,2}\varepsilon_{j,2}+\eta'_{j+1,1}\varepsilon_{j+1,1}=\eta=\frac{\alpha+\varepsilon-d}2$ and $\eta'_{1,1}=0$, with convention  $\eta'_{N+1,1}=0$.
	Therefore
	\[
	D_{\varepsilon_1,...,\varepsilon_N}^{(n,N)}=\mathcal O\left(\sum_{a_1,...,a_N\in\mathbb Z^d}\prod_{j=1}^N\left(|\beta_{a_j}|\, |a_j|^{\eta
	}\, \sum_{k_j=0}^n\mathfrak a_{k_j}^{-d-\eta'_{j,1}\varepsilon_{j,1}-\eta'_{j,2}\varepsilon_{j,2}}\right)\right)
	=\mathcal O\left( d_{\varepsilon'_1,...,\varepsilon'_N}^{(n,N)}\right)\, ,
	\]
where we set $\varepsilon'_j:=(\eta'_{j,1}\varepsilon_{j,1},\eta'_{j,2}\varepsilon_{j,2})/\eta$.
We also consider the   sequence $\varepsilon''_1,...,\varepsilon''_N$ obtained from $\boldsymbol\varepsilon'$ by permuting the values
of $\varepsilon'_{j_0,2}$ and $\varepsilon'_{j_0+1,1}$. 
Both sequences $\boldsymbol\varepsilon'$ and $\boldsymbol\varepsilon''$
are admissible and satisfy $\varepsilon'_{j,2}+\varepsilon'_{j+1,1}=\varepsilon''_{j,2}+\varepsilon''_{j+1,1}=1$ for all $j=1,...,N$. Thus it follows from the previous item that
\[
D_{\varepsilon_1,...,\varepsilon_N}^{(n,N)}	=\mathcal O\left( \min\left(d_{\varepsilon'_1,...,\varepsilon'_N}^{(n,N)},d_{\varepsilon''_1,...,\varepsilon''_N}^{(n,N)}\right)\right)=o\left(\mathfrak A_n^{N/2}\right)\, ,
\]
since $\boldsymbol\varepsilon'$ and $\boldsymbol\varepsilon''$ cannot both coincide with the alternate sequence $(0,0),(1,1),...,(0,0),(1,1)$.

\end{itemize}
Estimate~\eqref{dominH} follows from the two last items.

{\bf Acknowledgement~:}
The author conduced this work within the framework of the Henri
Lebesgue Center ANR-11-LABX-0020-01 and is supported by the ANR project GALS (ANR-23-CE40-0001).

\end{appendix}

\end{document}